\numberwithin{equation}{section}
\newtheorem{theor}{Theorem}[section]
\newtheorem{limma}[theor]{Lemma}
\newtheorem{prop}[theor]{Proposition}
\newcounter{other}            % Questions get letters
\newtheorem{otherth}[other]{Theorem}              % Other papers' theorems
\newtheorem{otherp}[other]{ Proposition}% Other papers'propositions
\newtheorem{otherl}[other]{ Lemma}        % Other papers' lemmas
\def \B{\mathcal B}
\def \Q{\mathcal Q}
\def \Cu{\mathcal{C}_\mu}
\def \D{\mathbb{D}}
\def \c{\mathbb{C}}
\def \T{\mathbb{T}}
\def \f{\frac}
\def \C {\mathcal C}
\def \qp {\mathcal{Q}_p}
\def \Cus{\mathcal{C}_{\mu, s}}
\begin{document}
\title[The range of a Ces\`aro-like operator acting on $H^\infty$ ]
{Carleson measures and the range of a Ces\`aro-like operator acting on $H^\infty$}

\author{Guanlong Bao, Fangmei Sun and Hasi Wulan}
\address{Guanlong Bao\\
Department of Mathematics\\
    Shantou University\\
    Shantou, Guangdong 515063, China}
\email{glbao@stu.edu.cn}

\address{Fangmei Sun\\
Department of Mathematics\\
    Shantou University\\
    Shantou, Guangdong 515063, China}
\email{18fmsun@stu.edu.cn}

\address{Hasi Wulan\\
Department of Mathematics\\
    Shantou University\\
    Shantou, Guangdong 515063, China}
\email{wulan@stu.edu.cn}

\thanks{The work was supported  by NNSF of China (No. 11720101003) and NSF of Guangdong Province (No. 2022A1515012117).}
\subjclass[2010]{47B38, 30H05, 30H25, 30H35}
\keywords{Ces\`aro-like operator, Carleson measure,  $H^\infty$, $BMOA$}

\begin{abstract}
In this paper, by describing characterizations of Carleson type measures on $[0,1)$, we determine the range of a Ces\`aro-like operator acting on $H^\infty$. A special case of our result gives an answer to a question posed by P. Galanopoulos, D. Girela and N. Merch\'an recently.

\end{abstract}

\maketitle

\section{Introduction}

 Let $\D$ be the open unit disk in the complex plane $\c$.  Denote by $H(\D)$  the space of functions analytic in $\D$.
For $f(z)=\sum_{n=0}^\infty a_nz^n$ in  $H(\D)$, the Ces\`aro operator $\C$ is defined  by
 $$
\C (f)(z)=\sum_{n=0}^\infty\left(\f{1}{n+1}\sum_{k=0}^n a_k\right)z^n, \quad z\in\D.
$$
See \cite{BWY, DS,  EX, M, S1, S2} for the investigation of the Ces\`aro operator acting on some analytic function spaces.

Recently, P. Galanopoulos, D. Girela and N. Merch\'an \cite{GGM} introduced a Ces\`aro-like operator $\Cu$ on $H(\D)$.
For nonnegative integer $n$, let $\mu_n$ be the moment of order $n$ of a finite positive Borel measure  $\mu$ on  $[0, 1)$; that is,
$$
\mu_n=\int_{[0, 1)} t^{n}d\mu(t).
$$
For $f(z)=\sum_{n=0}^\infty a_nz^n$ belonging to   $H(\D)$, the Ces\`aro-like operator $\Cu$ is defined by
$$
\Cu (f)(z)=\sum^\infty_{n=0}\left(\mu_n\sum^n_{k=0}a_k\right)z^n, \quad z\in\D.
$$
If $d\mu(t)=dt$, then $\Cu=\C$. In \cite{GGM}, the authors studied the action of $\Cu$ on distinct spaces of analytic functions.

We also need to  recall some function spaces. For $0<p<\infty$, $H^p$ denotes the classical Hardy space \cite{D} of  those functions  $f\in H(\D)$ for which
$$
\sup_{0<r<1} M_p(r, f)<\infty,
$$
where
$$
M_p(r, f)= \left(\f{1}{2\pi}\int_0^{2\pi}|f(re^{i\theta})|^p d\theta \right)^{1/p}.
$$
As usual, denote by $H^\infty$ the space of bounded analytic functions in  $\D$. It is well known that $H^\infty$ is a proper subset of the Bloch space $\B$ which consists of those functions $f\in H(\D)$ satisfying
$$
\|f\|_\B=\sup_{z\in \D}(1-|z|^2)|f'(z)|<\infty.
$$

Denote by $\text{Aut}(\D)$ the group of M\"obius maps of $\D$, namely,
$$
\text{Aut}(\D)=\{e^{i\theta}\sigma_a:\  \ a\in \D \ \text{and} \ \theta \ \ \text{is real}\},
$$
where
$$
\sigma_a(z)=\frac{a-z}{1-\overline{a}z}, \qquad z\in \D.
$$
  In 1995  R. Aulaskari,  J. Xiao and  R. Zhao \cite{AXZ} introduced  $\qp$ spaces.  For
$0\leq p<\infty$, a function $f$ analytic in $\D$   belongs to $\qp$ if
$$
\|f\|_{\mathcal{Q}_p}^2=\sup_{w\in \D} \int_\D |f'(z)|^2(1-|\sigma_w(z)|^2)^p dA(z)<\infty,
$$
where  $dA$ is the area measure on $\c$ normalized so that $A(\D)=1$. $\qp$ spaces  are M\"obius invariant in the sense that
$$
\|f\|_{\qp}=\|f\circ \phi\|_{\qp}
$$
for every $f\in \qp$ and $\phi \in \text{Aut}(\D)$. It was shown in  \cite{X1} that $\Q_2$ coincides with the Bloch space $\B$.
This result was extended in  \cite{AL} by showing that $\qp=\B$ for all $1<p<\infty$.
The space $\Q_1$ coincides with $BMOA$, the set of analytic functions in  $\D$ with boundary values of bounded mean oscillation (see \cite{B, Gir}). The space $\Q_0$ is the Dirichlet space $\mathcal D$.  For $0<p<1$, the space $\qp$ is a proper subset of $BMOA$ and has many interesting
properties.  See J. Xiao's monographs  \cite{X2, X3} for the theory of  $\qp$ spaces.

For  $1\leq p<\infty$ and $0<\alpha\leq 1$, the mean Lipschitz space $\Lambda^p_\alpha$ is the set of those functions $f\in H(\D)$ with a non-tangential limit  almost everywhere such that $\omega_p(t, f)=O(t^\alpha)$ as $t\to 0$. Here $\omega_p(\cdot, f)$ is the integral modulus of continuity of order $p$ of the function $f(e^{i\theta})$. It is well known (cf.\cite[Chapter 5]{D}) that $\Lambda^p_\alpha$ is a subset of $H^p$ and $\Lambda^p_\alpha$ consists of those functions $f\in H(\D)$ satisfying
$$
\|f\|_{\Lambda^p_\alpha}=\sup_{0<r<1}(1-r)^{1-\alpha}M_p(r, f')<\infty.
$$
Among these spaces, the spaces $\Lambda^p_{1/p}$  are of special interest.   $\Lambda^p_{1/p}$ spaces  increase with $p\in (1, \infty)$ in the sense of inclusion and they  are contained in $BMOA$ (cf. \cite{BSS}).
By Theorem 1.4 in \cite{ASX}, $\Lambda^p_{1/p}\subseteq \Q_q$ when $1\leq p<2/(1-q)$ and $0<q<1$. In particular,  $\Lambda^2_{1/2}\subseteq \Q_q \subseteq \B$ for all $0<q<\infty$.

Given an arc $I$ of the unit circle $\T$ with arclength $|I|$ (normalized  such that $|\T|=1$),  the
 Carleson box  $S(I)$ is given by
$$
S(I)=\{r\zeta \in \D: 1-|I|<r<1, \ \zeta\in I\}.
$$
  For $0<s<\infty$,  a positive   Borel measure $\nu$ on $\D$ is said to be an  $s$-Carleson measure  if
$$
\sup_{I\subseteq\T}\frac{\nu(S(I))}{|I|^s}<\infty.
$$
If $\nu$ is a $1$-Carleson measure, we write   that $\nu$ is a Carleson measure  characterizing $H^p\subseteq L^p(d\nu)$ (cf. \cite{D}).
A positive Borel measure $\mu$ on [0, 1) can be seen as a Borel measure on $\D$ by identifying it
with the measure $\tilde{\mu}$ defined by
$$
\tilde{\mu}(E)=\mu(E \cap [0, 1)),
$$
for any Borel subset $E$ of $\D$. Thus $\mu$ is an $s$-Carleson measure on $[0,1)$ if there is a positive constant $C$ such that
$$
\mu([t, 1)) \leq C (1-t)^s
$$
for all $t\in [0, 1)$. We refer to \cite{BYZ} for the investigation of this kind of  measures associated with Hankel measures.

It is known that the Ces\`aro operator $\C$  is bounded on $H^p$ for all $0<p<\infty$ (cf. \cite{M, S1, S2}) but this is not true on $H^\infty$. In fact,  N. Danikas and A. Siskakis \cite{DS} gave   that
$\mathcal{C }(H^\infty)\nsubseteq H^\infty$ but $\mathcal{C }(H^\infty)\subseteq BMOA$. Later M. Ess\'en and J. Xiao \cite{EX} proved that $\mathcal{C }(H^\infty)\subsetneqq \qp$ \ for \  $0<p<1$. Recently, the relation between $\mathcal{C }(H^\infty)$ and a class of M\"obius invariant function spaces was considered in \cite{BWY}.

It is quite natural to study $\Cu(H^\infty)$. In \cite{GGM} the authors characterized positive Borel measures $\mu$ such that $\Cu(H^\infty)\subseteq H^\infty$ and proved that $\Cu(H^\infty)\subseteq \B$ if and only if $\mu$ is a Carleson measure. Moreover, they showed that if $\Cu(H^\infty)\subseteq BMOA$, then $\mu$ is a Carleson measure. In \cite[p. 20]{GGM}, the authors asked whether or not $\mu$ being a Carleson measure implies that $\Cu(H^\infty)\subseteq BMOA$.  In this paper,
by giving some descriptions of
$s$-Carleson measures on $[0,1)$,
for $0<p<2$,  we show that $\Cu(H^\infty)\subseteq \qp$ if and only if $\mu$ is a Carleson measure, which giving an affirmative answer to their  question.  We also  consider another Ces\`aro-like operator $\C_{\mu, s}$ and describe the embedding  $\Cus(H^\infty)\subseteq X$ in terms of $s$-Carleson measures, where $X$ is
between $\Lambda^p_{1/p}$ and $\B$ for $\max\{1, 1/s\}<p<\infty$.

Throughout  this paper, the symbol $A\thickapprox B$ means that $A\lesssim
B\lesssim A$. We say that $A\lesssim B$ if there exists a positive
constant $C$ such that $A\leq CB$.

\section{Positive Borel measures on [0, 1)  as Carleson type measures }

In this section, we give some characterizations of positive Borel measures on [0, 1)  as Carleson type measures.

The following  description of Carleson type measures (cf. \cite{Bla} ) is well known.
\begin{otherl}\label{S-CM}
Suppose $s>0$, $t>0$ and $\mu$ is a positive  Borel measure on $\D$. Then $\mu$ is an $s$-Carleson measure if and only if
\begin{equation}\label{sCMformula}
\sup_{a\in \D}\int_{\D} \frac{(1-|a|^2)^t}{|1-\overline{a}w|^{s+t}}d\mu(w)<\infty.
\end{equation}
\end{otherl}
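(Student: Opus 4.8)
The plan is to prove the two implications separately: the backward one by testing \eqref{sCMformula} on Carleson boxes, and the forward one by a dyadic annular decomposition of $\D$ combined with the standard pointwise lower bound for $|1-\overline{a}w|$.

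For the ``if'' direction, I would assume the supremum in \eqref{sCMformula} equals a finite number $M$, fix an arc $I\subseteq\T$ with centre $\zeta$, and apply \eqref{sCMformula} at the point $a=(1-|I|)\zeta\in\D$. Then $1-|a|^{2}\thickapprox|I|$, and for every $w\in S(I)$ a routine estimate splitting $|1-\overline{a}w|$ into its radial and angular parts gives $|1-\overline{a}w|\lesssim|I|$, so the integrand in \eqref{sCMformula} is $\gtrsim|I|^{-s}$ throughout $S(I)$. Restricting the integral in \eqref{sCMformula} to $S(I)$ then yields $\mu(S(I))\lesssim M|I|^{s}$; as $I$ is arbitrary, $\mu$ is an $s$-Carleson measure.

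For the ``only if'' direction, assume $\mu$ is an $s$-Carleson measure, so in particular $\mu(\D)<\infty$. Fixing $a\in\D$, I would set $h=1-|a|$ and let $I_a$ be the arc centred at $a/|a|$ with $|I_a|=h$, then decompose $\D$ as the union of $S_0=S(I_a)$ and the annuli $S_k=S(2^{k}I_a)\setminus S(2^{k-1}I_a)$, $k\ge1$ (only finitely many of which are nonempty, since $S(2^{k}I_a)=\D$ once $2^{k}h\ge1$). The crucial ingredient is the pointwise estimate $|1-\overline{a}w|\gtrsim2^{k}h$ whenever $w\notin S(2^{k-1}I_a)$: such a $w$ must satisfy $1-|w|\gtrsim2^{k}h$ or $|\arg w-\arg a|\gtrsim2^{k}h$, and in either case the well-known bound $|1-\overline{a}w|\gtrsim(1-|w|)+|\arg w-\arg a|$ (valid once $|a|\ge1/2$, the complementary case being trivial) gives the claim, while for $k=0$ one simply uses $|1-\overline{a}w|\ge1-|a|=h$. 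Hence on $S_k$ the integrand is $\lesssim h^{t}/(2^{k}h)^{s+t}=2^{-k(s+t)}h^{-s}$, and the $s$-Carleson condition gives $\mu(S_k)\le\mu(S(2^{k}I_a))\le C(2^{k}h)^{s}$, so that
$$
\int_{\D}\frac{(1-|a|^{2})^{t}}{|1-\overline{a}w|^{s+t}}\,d\mu(w)=\sum_{k\ge0}\int_{S_k}\frac{(1-|a|^{2})^{t}}{|1-\overline{a}w|^{s+t}}\,d\mu(w)\lesssim\sum_{k\ge0}C\,2^{-kt}\lesssim\frac{C}{1-2^{-t}}.
$$
This bound is finite and independent of $a$ precisely because $t>0$, so taking the supremum over $a\in\D$ finishes the proof. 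I expect the only real obstacle to be the pointwise lower bound for $|1-\overline{a}w|$ on the annuli, together with the bookkeeping for the dilated arcs $2^{k}I_a$; everything else is direct estimation, and the hypothesis $t>0$ is used only to sum the geometric series.
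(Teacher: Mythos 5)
Your proposal is correct; note that the paper does not prove this lemma at all but simply quotes it as a known result (citing Blasco), so there is no in-paper argument to compare against. Your argument --- testing \eqref{sCMformula} at $a=(1-|I|)\zeta$ to get the Carleson condition, and conversely decomposing $\D$ into dyadic Carleson annuli $S(2^{k}I_a)\setminus S(2^{k-1}I_a)$ together with the standard lower bound $|1-\overline{a}w|\gtrsim(1-|w|)+|\arg w-\arg a|$ and the geometric series in $2^{-kt}$ --- is exactly the classical proof of this characterization, and all the steps (including the trivial cases $|a|<1/2$ and $2^{k}h\ge 1$) are handled correctly.
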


For Carleson type  measures  on   [0, 1),  we can obtain some  descriptions that are  different from Lemma \ref{S-CM}.  Now we give  the first main result in this section.

\begin{prop}\label{newCM1}
Suppose  $0<t<\infty$, $0\leq  r<s<\infty$ and  $\mu$ is a finite positive  Borel measure on $[0,1)$. Then the following conditions are equivalent:
\begin{enumerate}
  \item [(i)] $\mu$ is an $s$-Carleson measure;
   \item [(ii)]  \begin{equation}\label{1formulaCM}
\sup_{a\in\D}\int_{[0,1)}\frac{(1-|a|)^t}{(1-x)^{r}(1-|a|x)^{s+t-r}}d\mu(x)<\infty;
\end{equation}
  \item [(iii)] \begin{equation}\label{2formulaCM}
\sup_{a\in\D}\int_{[0,1)}\frac{(1-|a|)^t}{(1-x)^{r}|1-ax|^{s+t-r}}d\mu(x)<\infty.
\end{equation}
\end{enumerate}
\end{prop}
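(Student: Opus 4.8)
The plan is to prove the chain of implications $(i)\Rightarrow(ii)\Rightarrow(iii)\Rightarrow(i)$, reducing as much as possible to the already-available Lemma \ref{S-CM}. The key observation is that for $x\in[0,1)$ and $a\in\D$ one always has $1-|a|x\geq 1-|a|$ and $1-|a|x\geq 1-x$, so the factor $(1-x)^{-r}(1-|a|x)^{-(s+t-r)}$ is genuinely smaller than the ``diagonal'' kernel that appears when $r=0$; this is what makes the condition with a nonzero $r$ nontrivially equivalent to the others.

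For $(i)\Rightarrow(ii)$: assume $\mu([x,1))\leq C(1-x)^s$. The plan is to estimate the integral in \eqref{1formulaCM} by splitting $[0,1)$ into the usual dyadic ``rings'' relative to the point $a$, namely $E_0=\{x: 1-x\geq 1-|a|\}$ and $E_k=\{x: 2^{-k}(1-|a|)\leq 1-x< 2^{-k+1}(1-|a|)\}$ for $k\geq 1$. On $E_0$ one uses $1-|a|x\geq 1-x$ to bound the kernel by $(1-x)^{-s-t}(1-|a|)^t$ and then the standard estimate $\int_{E_0}(1-x)^{-s-t}d\mu(x)\lesssim (1-|a|)^{-t}$ (which follows from $\mu([x,1))\lesssim(1-x)^s$ by integration by parts / summation over subrings, using $s+t>s$). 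On $E_k$ with $k\geq1$ one uses $1-|a|x\geq 1-|a|$ to bound the kernel by $(1-|a|)^t(1-x)^{-r}(1-|a|)^{-(s+t-r)}=(1-x)^{-r}(1-|a|)^{r-s}$, and $\mu(E_k)\leq\mu([1-2^{-k+1}(1-|a|),1))\lesssim (2^{-k}(1-|a|))^s$, together with $1-x\approx 2^{-k}(1-|a|)$ on $E_k$; summing the resulting geometric series in $k$ (convergent because $s>r$) gives the bound, uniformly in $a$.

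The implication $(iii)\Rightarrow(i)$ is the easy direction: given \eqref{2formulaCM}, fix $t_0\in(0,1)$ and for a point $b\in[0,1)$ choose $a=b$; then on the set $[b,1)$ we have $|1-ax|=1-bx\leq 2(1-b)$ and $1-x\leq 1-b$, so the integrand is bounded below by a constant multiple of $(1-b)^t(1-b)^{-r}(1-b)^{-(s+t-r)}=(1-b)^{-s}$ on $[b,1)$, hence $(1-b)^{-s}\mu([b,1))\lesssim \eqref{2formulaCM}$, which is exactly the $s$-Carleson condition for measures on $[0,1)$. Finally $(ii)\Rightarrow(iii)$ should be handled by comparing $|1-ax|$ with $1-|a|x$: since $x\geq 0$, writing $a=|a|e^{i\theta}$ one has $|1-ax|\geq |1-|a|x|=1-|a|x$ when... actually the clean route is the reverse comparison $1-|a|x\leq |1-ax|$ is false in general, so instead I would prove $(ii)\Leftrightarrow(iii)$ simultaneously by showing each is equivalent to $(i)$: the implication $(i)\Rightarrow(iii)$ runs exactly as $(i)\Rightarrow(ii)$ above but using $|1-ax|\geq 1-|a|x\geq\max\{1-|a|,1-x\}$ at the corresponding places (the same ring decomposition works verbatim), and $(ii)\Rightarrow(i)$ is the same easy lower-bound argument as $(iii)\Rightarrow(i)$ with $a=b\in[0,1)$ so that $1-ax=1-bx$ and the two kernels literally coincide. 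Thus the logical skeleton is: $(i)\Rightarrow(ii)$, $(i)\Rightarrow(iii)$, $(ii)\Rightarrow(i)$, $(iii)\Rightarrow(i)$.

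The main obstacle is the ring-decomposition estimate in $(i)\Rightarrow(ii)$ and $(i)\Rightarrow(iii)$, specifically keeping the bounds uniform in $a\in\D$ and making sure the two different kernel estimates (one valid near the ``top'' ring, one valid on the far rings) are glued so that both geometric series converge; this uses $s+t>s$ for the near part and $s>r$ for the far part, and both hypotheses $t>0$ and $r<s$ are needed precisely here. The converse directions are routine once one notices that restricting the supremum to $a\in[0,1)$ already forces the $s$-Carleson condition.
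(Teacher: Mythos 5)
Your proof is correct, and the hard implication $(i)\Rightarrow(ii)$ is essentially the paper's argument: both decompose $[0,1)$ dyadically according to the size of $1-x$ relative to $1-|a|$, bound the kernel by $1-|a|x\geq 1-x$ on the far rings and $1-|a|x\geq 1-|a|$ on the near ones, and sum two geometric series whose convergence uses exactly $t>0$ and $r<s$ (the paper phrases the far-ring estimate via the distribution-function formula rather than dyadic subrings, which is cosmetic). Where you genuinely diverge is in the easy directions. The paper closes the cycle $(i)\Rightarrow(ii)\Rightarrow(iii)\Rightarrow(i)$: the middle step is immediate, and $(iii)\Rightarrow(i)$ is obtained by discarding the factor $(1-x)^{-r}$ via $1-x\leq|1-ax|$ and invoking Blasco's characterization (Lemma \ref{S-CM}). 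Your $(ii)\Rightarrow(i)$ and $(iii)\Rightarrow(i)$ instead test at $a=b\in[0,1)$ and bound the integrand below by a constant times $(1-b)^{-s}$ on $[b,1)$; this is more elementary and self-contained, at the cost of not reusing the known lemma. One point to correct: you dismiss the comparison $1-|a|x\leq|1-ax|$ as ``false in general,'' but it is just the triangle inequality $|1-w|\geq 1-|w|$ with $w=ax$ and $x\geq 0$; since $s+t-r>0$ this gives $(ii)\Rightarrow(iii)$ immediately, exactly as the paper notes, and would let you drop the separate proof of $(i)\Rightarrow(iii)$. Your fallback skeleton $(i)\Rightarrow(ii)$, $(i)\Rightarrow(iii)$, $(ii)\Rightarrow(i)$, $(iii)\Rightarrow(i)$ is nonetheless logically complete, so this is an inefficiency and a wrong side remark rather than a gap.
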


\begin{proof}
$(i)\Rightarrow (ii)$. Let $\mu$ be  an $s$-Carleson measure.  Fix $a\in \D$ with $|a|\leq 1/2$. If  $r=0$, the desired result holds.  For $0<r<s$, using a well-known formula about the distribution function(\cite[p.20 ]{Gar}), we get
\begin{align}\label{bu1}
&\int_{[0,1)}\frac{(1-|a|)^t}{(1-x)^{r}(1-|a|x)^{s+t-r}}d\mu(x)\nonumber \\
\thickapprox & \int_{[0,1)}\left(\frac{1}{1-x}\right)^rd\mu(x) \nonumber \\
\thickapprox & r \int_0^\infty \lambda^{r-1} \mu(\{x\in [0, 1): 1-\frac{1}{\lambda}<x \})d\lambda \nonumber \\
\lesssim & \int_0^1 \lambda^{r-1} \mu([0, 1))d\lambda + \int_1^\infty \lambda^{r-1} \mu([1-\frac{1}{\lambda}, 1))d\lambda \nonumber \\
 \lesssim & 1+ \int_1^\infty \lambda^{r-s-1} d\lambda \lesssim1.
\end{align}
Fix $a\in\D$ with $|a|>1/2$ and let
\begin{align*}
	S_n(a)=\{x\in[0,1): 1-2^n(1-|a|)\leq x<1\}, \ \ n=1, 2, \cdots .
\end{align*}
Let $n_a$ be the minimal integer such that $1-2^{n_a}(1-a)\leq 0$. Then $S_n(a)=[0, 1)$ when $n\geq n_a$.
If   $ x\in S_1(a)$, then
 \begin{equation}\label{301}
1-|a| \leq  1-|a|x.
 \end{equation}
Also, for $2\leq n\leq  n_a$ and $x\in S_n(a)\backslash S_{n-1}(a)$, we have
\begin{equation}\label{302}
1-|a|x \geq |a|-x \geq |a|-(1-2^{n-1}(1-|a|))=(2^{n-1}-1)(1-|a|).
\end{equation}
We write
\begin{align*}
	& \int_{[0,1)}\frac{(1-|a|)^t}{(1-x)^r(1-|a|x)^{s+t-r}}d\mu(x)\\
	=&\int_{S_1(a)}\frac{(1-|a|)^t}{(1-x)^r(1-|a|x)^{s+t-r}}d\mu(x)\\
&+\sum^{n_a}_{n=2}\int_{S_n(a)\backslash S_{n-1}(a)}\frac{(1-|a|)^t}{(1-x)^r(1-|a|x)^{s+t-r}}d\mu(x)\\
	=: &J_1(a)+J_2(a).
\end{align*}
If $r=0$, bearing in mind that (\ref{301}), (\ref{302}) and $\mu$ is an $s$-Carleson measure, it is easy to check that  $J_i(a)\lesssim 1$ for $i=1, 2$. Now consider  $0<t<\infty$ and  $0< r<s<\infty$. Using  (\ref{301})   and   some estimates similar to (\ref{bu1}), we have
\begin{align*}
	J_1(a) \lesssim (1-|a|)^{r-s}\int_{S_1(a)}\left(\frac{1}{1-x}\right)^rd\mu(x)
\lesssim   1.
\end{align*}
Note that  (\ref{302}),  $0<t<\infty$,  $0< r<s<\infty$ and  $\mu$ is an $s$-Carleson measure.
Then
{\small {\small
\begin{align*}
	&J_2(a)\\
\lesssim& \sum^{n_a}_{n=2}\frac{(1-|a|)^{r-s}}{2^{n(s+t-r)}}\int_{S_n(a)\backslash S_{n-1}(a)}\left(\frac{1}{1-x}\right)^rd\mu(x)\\
	\lesssim& \sum^{n_a}_{n=2}\frac{(1-|a|)^{r-s}}{2^{n(s+t-r)}}\int_0^\infty\lambda^{r-1}\mu\big(\big\{x\in[1-2^n(1-|a|),1): 1-\frac{1}{\lambda}<x\big\}\big)d\lambda\\
	\thickapprox& \sum^{n_a}_{n=2}\frac{(1-|a|)^{r-s}}{2^{n(s+t-r)}}\bigg(\int_0^{\frac{1}{2^n(1-|a|)}}\lambda^{r-1}\mu\big([1-2^n(1-|a|),1)\big)d\lambda\\
	&+\int_{\frac{1}{2^n(1-|a|)}}^\infty\lambda^{r-1}\mu\big(\big[1-\frac{1}{\lambda},1\big)\big)d\lambda\bigg)\\
	\lesssim& \sum^{n_a}_{n=2}\frac{(1-|a|)^{r-s}}{2^{n(s+t-r)}}\bigg(2^{ns}(1-|a|)^s\int_0^{\frac{1}{2^n(1-|a|)}}\lambda^{r-1}d\lambda
	+\int_{\frac{1}{2^n(1-|a|)}}^\infty\lambda^{r-1-s}d\lambda\bigg)\\
\thickapprox&  \sum^{n_a}_{n=2} \frac{1}{2^{tn}}<\infty.
	\end{align*}}}
Consequently,
$$
\sup_{a\in \D}\int_{[0,1)}\frac{(1-|a|)^t}{(1-x)^r(1-|a|x)^{s+t-r}}d\mu(x)<\infty.
$$

The implication of $(ii)\Rightarrow (iii)$ is clear.

$(iii)\Rightarrow (i)$.  For $r\geq 0$, it is clear that
\begin{align*}
\int_{[0,1)}\frac{(1-|a|)^t}{(1-x)^{r}|1-ax|^{s+t-r}}d\mu(x)\geq \int_{[0,1)}\frac{(1-|a|)^t}{|1-ax|^{s+t}}d\mu(x)
\end{align*}
for all $a\in \D$. Combining this with Lemma \ref{S-CM}, we see that if (\ref{2formulaCM}) holds, then $\mu$ is an $s$-Carleson measure.
\end{proof}

\noindent {\bf  Remark 1.}\ \ The condition $0\leq  r<s<\infty$ in Proposition \ref{newCM1} can not be changed to $r\geq s>0$. For example, let $d\mu_1(x)=(1-x)^{s-1}dx$, $x\in [0, 1)$. Then $\mu_1$ is an $s$-Carleson measure but for $r\geq s>0$,
\begin{align*}
&\sup_{a\in\D}\int_{[0,1)}\frac{(1-|a|)^t}{(1-x)^{r}|1-ax|^{s+t-r}}d\mu_1(x)\\
\geq & \int_0^1 (1-x)^{s-1-r}dx=+\infty.
\end{align*}

\noindent {\bf  Remark 2.}\ \ $\mu$ supported  on $[0,1)$ is essential in Proposition \ref{newCM1}. For example,  consider $0<t<1$, $0<  r<s<1$ and $s=r+t$.  Set $d\mu_2(w)=|f'(w)|^2(1-|w|^2)^sdA(w)$, $w\in \D$, where $f\in \Q_s\setminus \Q_t$. Note that for $0<p<\infty$ and  $g\in H(\D)$,  $|g'(w)|^2(1-|w|^2)^pdA(w)$ is a $p$-Carleson measure  if and only if $g\in\Q_p$ (cf. \cite{X2}). Hence  $d\mu_2$ is an $s$-Carleson measure. But
\begin{align*}
	&\sup_{a\in\D}\int_{\D}\frac{(1-|a|)^t}{(1-|w|)^{r}|1-a\overline{w}|^{s+t-r}}d\mu_2(w)\\
	&=\sup_{a\in\D}\int_{\D}|f'(w)|^2\frac{(1-|a|)^t(1-|w|)^{s-r}}{|1-a\overline{w}|^{s+t-r}}dA(w)\\
	&\thickapprox\sup_{a\in\D}\int_{\D}|f'(w)|^2(1-|\sigma_a(w)|^2)^tdA(w)=+\infty.
\end{align*}

Before giving the other characterization of Carleson type measures on $[0,1)$, we need to  recall some results.

The following result is Lemma 1 in \cite{Mer}, which generalizes Lemma 3.1 in \cite{GM} from $p=2$ to $1<p<\infty$.

\begin{otherl}\label{inter labda B}
Let $f\in H(\D)$ with $f(z)=\sum^\infty_{n=0}a_n z^n$. Suppose  $1<p<\infty$ and  the sequence $\{a_n\}$ is a decreasing sequence of nonnegative numbers. If  $X$ is a subspace of $H(\D)$ with $\Lambda^p_{1/p}\subseteq X\subseteq\B$, then
$$
	f\in X \iff a_n=O\left(\frac1n\right).
$$
\end{otherl}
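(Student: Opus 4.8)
The plan is to remove $X$ from the statement by means of the sandwich $\Lambda^p_{1/p}\subseteq X\subseteq\B$: it then suffices to prove the two implications
$$
f\in\B\ \Longrightarrow\ a_n=O(1/n)
\qquad\text{and}\qquad
a_n=O(1/n)\ \Longrightarrow\ f\in\Lambda^p_{1/p},
$$
for $f(z)=\sum_{n=0}^\infty a_nz^n$ with $\{a_n\}$ decreasing and nonnegative. Indeed, the first implication gives ``$f\in X\Rightarrow a_n=O(1/n)$'' because $X\subseteq\B$, while the second gives the reverse implication because $\Lambda^p_{1/p}\subseteq X$. Notice that $p$ has disappeared from the first implication, so the real content is in the second.

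The first implication I would read off directly from the Bloch condition. Every Taylor coefficient of $f'$ is nonnegative and $\{a_n\}$ is decreasing, so for $N\ge2$ and $r=1-1/N$,
$$
a_NN^2\lesssim a_N\sum_{n=1}^{N}nr^{n-1}\le\sum_{n=1}^{N}na_nr^{n-1}\le f'(r)\le\frac{\|f\|_\B}{1-r^2}\le N\|f\|_\B,
$$
whence $a_N\lesssim\|f\|_\B/N$, i.e.\ $a_n=O(1/n)$. (In particular $a_n\to0$, so nothing extra need be assumed about $\lim a_n$.)

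For the second implication I would use the classical identification of the mean Lipschitz space with a Besov space: for $1<p<\infty$ and $0<\alpha<1$ one has $\Lambda^p_\alpha=B^\alpha_{p,\infty}(\T)$, so it is enough to show that $\sup_{k\ge0}2^{k/p}\|\Delta_kf\|_{L^p(\T)}<\infty$, where $\Delta_kf(e^{i\theta})=\sum_{2^k\le n<2^{k+1}}a_ne^{in\theta}$ is the $k$-th dyadic Fourier block of $f$. Put $M=2^k$ and $E_j(e^{i\theta})=\sum_{n=M}^{j}e^{in\theta}$. Summation by parts yields
$$
\Delta_kf=a_{2M-1}E_{2M-1}+\sum_{n=M}^{2M-2}(a_n-a_{n+1})E_n.
$$
Each $|E_j|$ is a Dirichlet kernel of degree $j-M<M$, so $\|E_j\|_{L^p}\lesssim M^{1-1/p}$ for $1<p<\infty$; and because $\{a_n\}$ is decreasing, the coefficients in that identity are nonnegative and telescope to $a_M$, so
$$
\|\Delta_kf\|_{L^p}\lesssim M^{1-1/p}\Big(a_{2M-1}+\sum_{n=M}^{2M-2}(a_n-a_{n+1})\Big)=M^{1-1/p}a_M=2^{k(1-1/p)}a_{2^k}.
$$
Since $a_n=O(1/n)$ forces $a_{2^k}\lesssim2^{-k}$, we get $2^{k/p}\|\Delta_kf\|_{L^p}\lesssim 2^{k/p}\cdot2^{k(1-1/p)}\cdot2^{-k}=1$ uniformly in $k$, hence $f\in\Lambda^p_{1/p}$. (For $p\ge2$ one can avoid Littlewood--Paley altogether: by Parseval and $a_n=O(1/n)$, $(1-r)M_2(r,f')^2=(1-r)\sum_{n\ge1}n^2a_n^2r^{2n-2}\lesssim1$, so $f\in\Lambda^2_{1/2}$, and $\Lambda^2_{1/2}\subseteq\Lambda^p_{1/p}$ since the spaces $\Lambda^q_{1/q}$ increase with the exponent.)

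I expect the real obstacle to be the second implication when $1<p<2$. There the Parseval shortcut is unavailable; the Hardy--Littlewood upper bound $M_p(r,g)^p\lesssim\sum_n(n+1)^{p-2}|c_n|^pr^{np}$, which would control $M_p(r,f')$ through the sequence $\{na_n\}$, requires either $p\ge2$ or monotone coefficients, and $\{na_n\}$ need not be monotone even when $\{a_n\}$ is; and summing the dyadic-block contributions of $f'$ by the triangle inequality would cost a factor of $\log\frac1{1-r}$. The point that saves the argument is the elementary structural fact that a decreasing nonnegative sequence with $na_n$ bounded has uniformly bounded dyadic block sums, $\sum_{2^k\le n<2^{k+1}}a_n\le 2^ka_{2^k}\lesssim1$; fed into the square-function (Besov) description of $\Lambda^p_{1/p}$ through the summation-by-parts estimate above, this absorbs exactly that loss.
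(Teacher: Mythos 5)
Your argument is correct, and it is worth noting that the paper itself offers no proof of this statement: it is quoted as Lemma~\ref{inter labda B} directly from Merch\'an's paper \cite{Mer} (which in turn extends Lemma~3.1 of \cite{GM} from $p=2$ to $1<p<\infty$). Your reduction is exactly the standard one implicit in those sources: since $\Lambda^p_{1/p}\subseteq X\subseteq\B$, it suffices to show that $f\in\B$ forces $a_n=O(1/n)$ and that $a_n=O(1/n)$ (with $\{a_n\}$ decreasing and nonnegative) forces $f\in\Lambda^p_{1/p}$. The necessity via $a_NN^2\lesssim f'(1-1/N)\lesssim N\|f\|_\B$ is the usual argument (one only needs $r^{n-1}\ge(1-1/N)^N\ge 1/4$ for $n\le N$, which you use implicitly). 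For the sufficiency your route through the dyadic-block (Besov) description of $\Lambda^p_{1/p}$, with Abel summation giving $\|\Delta_kf\|_{L^p}\lesssim 2^{k(1-1/p)}a_{2^k}$ because the Dirichlet kernels of degree at most $2^k$ have $L^p$-norm $O(2^{k(1-1/p)})$ and the differences $a_n-a_{n+1}$ telescope to $a_{2^k}$, is clean and correct; the only point requiring care is that the equivalence of $\Lambda^p_\alpha$ with the sharp-cutoff block condition $\|\Delta_kf\|_{L^p}=O(2^{-k\alpha})$ uses uniform $L^p$-boundedness of the frequency projections, hence genuinely needs $1<p<\infty$ — which the hypothesis supplies. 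Merch\'an's published proof reaches the same sufficiency through mean-growth estimates of $M_p(r,f')$ for power series with nonnegative coefficients rather than through Littlewood--Paley blocks, so your argument is a legitimate, self-contained alternative; your Parseval shortcut for $p\ge2$ recovers the original $p=2$ case of \cite{GM}.
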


We  recall a characterization  of $s$-Carleson measure $\mu$ on [0, 1) as follows (cf. \cite[Theorem 2.1]{BW} or \cite[Proposition 1]{CGP}).
\begin{otherp}\label{u n^s}
Let $\mu$ be a finite positive Borel measure on [0, 1) and $s>0$. Then $\mu$ is an $s$-Carleson measure if and only if the sequence of moments
$\{\mu_n\}_{n=0}^\infty$ satisfies $\sup_{n\geq 0} (1+n)^s \mu_n<\infty$.
\end{otherp}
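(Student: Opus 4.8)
The plan is to use the concrete form of the $s$-Carleson condition for measures on $[0,1)$ recorded in the introduction — namely that $\mu$ is an $s$-Carleson measure exactly when $\mu([t,1))\lesssim (1-t)^s$ for all $t\in[0,1)$ — and to prove the two implications separately. The bridge between $\mu([t,1))$ and the moments $\mu_n$ is the elementary Fubini identity
\[
\mu_n=\int_{[0,1)}x^n\,d\mu(x)=n\int_0^1 t^{n-1}\mu([t,1))\,dt\qquad (n\ge 1),
\]
obtained by writing $x^n=n\int_0^x t^{n-1}\,dt$ and interchanging the order of integration.

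For the implication from the $s$-Carleson condition to the moment bound, I would substitute $\mu([t,1))\le C(1-t)^s$ into the identity above to get $\mu_n\le Cn\int_0^1 t^{n-1}(1-t)^s\,dt=Cn\,B(n,s+1)$, and then recognise the Beta integral: $n\,B(n,s+1)=\Gamma(s+1)\,\Gamma(n+1)/\Gamma(n+s+1)\thickapprox n^{-s}$ by the standard asymptotics of the Gamma function. Together with $\mu_0=\mu([0,1))<\infty$ this gives $\sup_{n\ge 0}(1+n)^s\mu_n<\infty$. (Alternatively, to avoid special functions, decompose $[0,1)$ into $[1-1/n,1)$ together with the dyadic rings $[1-2^{j+1}/n,\,1-2^j/n)$, $j\ge 0$, on which $x^n\le e^{-2^j}$ while the Carleson hypothesis bounds the $\mu$-mass by a constant times $(2^{j+1}/n)^s$; summing the resulting geometric series yields $\mu_n\lesssim n^{-s}$.)

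For the converse, assume $\mu_n\le M(1+n)^{-s}$ for all $n$ and fix $t\in[0,1)$. When $t\le 1/2$ the bound is immediate from $\mu([t,1))\le\mu_0\le M\le 2^sM(1-t)^s$, so suppose $t\in(1/2,1)$ and set $n=\lceil 1/(1-t)\rceil\ge 2$. Using $-\log(1-u)\le 2u$ on $[0,1/2]$ and $n(1-t)\le 1+(1-t)<3/2$, one gets $x^n\ge t^n\ge e^{-2n(1-t)}\ge e^{-3}$ for every $x\in[t,1)$, whence
\[
e^{-3}\,\mu([t,1))\le\int_{[t,1)}x^n\,d\mu(x)\le\mu_n\le M(1+n)^{-s}< M(1-t)^s,
\]
the last step because $1+n>1/(1-t)$. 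Thus $\mu([t,1))\lesssim (1-t)^s$, i.e. $\mu$ is an $s$-Carleson measure.

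I do not expect a genuine obstacle here: the whole argument hinges on the single idea of testing $\mu$ against the monomial $x^n$ with $n$ comparable to $1/(1-t)$ (equivalently, reading the identity for $\mu_n$ backwards). The only points needing a little care are the bookkeeping at the endpoints of the dyadic decomposition in the first implication — harmless since $\mu$ is finite and the outermost ring carries $(2^{j+1}/n)^s\ge 1$ — and the two elementary estimates $(1-u)^n\le e^{-nu}$ and $-\log(1-u)\le 2u$ for $0\le u\le 1/2$ used to keep $t^n$ bounded away from $0$.
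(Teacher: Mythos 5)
Your proof is correct. Note that the paper itself offers no proof of this proposition; it is quoted verbatim from the references (Theorem 2.1 of Bao--Wulan, or Proposition 1 of Chatzifountas--Girela--Pel\'aez), so there is no in-paper argument to compare against. Your two implications are exactly the standard ones: the forward direction via the layer-cake identity $\mu_n=n\int_0^1 t^{n-1}\mu([t,1))\,dt$ and the Beta-integral asymptotics $n\,B(n,s+1)\thickapprox n^{-s}$, and the converse by testing $\mu$ on $[t,1)$ against the monomial $x^n$ with $n\thickapprox 1/(1-t)$, where $t^n$ stays bounded below by $e^{-3}$ thanks to $-\log(1-u)\le 2u$ on $[0,1/2]$. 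All the quantitative steps check out (the distinction between $\mu((t,1))$ and $\mu([t,1))$ in the Fubini identity is harmless, since they differ only on the countable set of atoms, which is Lebesgue-null in $t$, and in any case you only need the inequality $\le$ there). One small stylistic remark: the dyadic-ring alternative you sketch for the forward direction is the same mechanism the paper does use elsewhere, namely in the estimate of $J_2(a)$ in the proof of Proposition \ref{newCM1}, so either version would sit naturally alongside the paper's other arguments.
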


The following characterization of functions with nonnegative Taylor coefficients in $\Q_p$ is  Theorem 2.3 in  \cite{AGW}.

\begin{otherth}\label{Qp coeff}
 Let $0<p<\infty$ and let $f(z)=\sum_{n=0}^\infty a_nz^n$ be an analytic function in $\D$ with $a_n\geq 0$ for all $n$. Then
$f\in \Q_p$ if and only if
$$
\sup_{0\leq r<1} \sum_{n=0}^\infty \f{(1-r)^p}{(n+1)^{p+1}}\left(\sum_{k=0}^n(k+1)a_{k+1}(n-k+1)^{p-1}r^{n-k}\right)^2<\infty.
$$
\end{otherth}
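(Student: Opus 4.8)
The plan is to derive the asserted equivalence by connecting the M\"obius-invariant quantity in the definition of $\|f\|_{\qp}$ with the displayed coefficient sum in two steps: first replace the supremum over $w\in\D$ by a supremum over $w=r\in[0,1)$, exploiting that $f$ has nonnegative coefficients, and then evaluate the remaining radial integrals through the orthogonality of monomials in $L^2(\D,(1-|z|^2)^p\,dA)$.

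For the first step I would use $1-|\sigma_w(z)|^2=(1-|w|^2)(1-|z|^2)|1-\overline{w}z|^{-2}$ and write $w=|w|e^{i\psi}$, $z=\rho e^{i\theta}$. Since
$$
|1-te^{i\phi}|^{-2p}=(1-te^{i\phi})^{-p}(1-te^{-i\phi})^{-p},\qquad 0\le t<1,
$$
is a product of two power series in $e^{i\phi}$ with nonnegative coefficients, its Fourier coefficients $d_m(t)=d_{-m}(t)$ are nonnegative. Expanding $|f'(\rho e^{i\theta})|^2=\sum_{j,k\ge0}(j+1)(k+1)a_{j+1}a_{k+1}\rho^{j+k}e^{i(j-k)\theta}$ and integrating in $\theta$ gives
$$
\int_0^{2\pi}|f'(\rho e^{i\theta})|^2\,|1-\overline{w}\rho e^{i\theta}|^{-2p}\,d\theta=2\pi\sum_{j,k\ge0}(j+1)(k+1)a_{j+1}a_{k+1}\,\rho^{j+k}\,d_{j-k}(|w|\rho)\cos\big((j-k)\psi\big),
$$
and, $a_n$ being nonnegative, every summand apart from the cosine is nonnegative, so the left-hand side is largest when $\psi=0$. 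Multiplying by $(1-|w|^2)^p(1-\rho^2)^p$ and integrating in $\rho$ then yields
$$
\|f\|_{\qp}^2=\sup_{0\le r<1}\int_{\D}|f'(z)|^2\big(1-|\sigma_r(z)|^2\big)^p\,dA(z).
$$

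For the second step, fix $r\in[0,1)$. From $(1-|\sigma_r(z)|^2)^p\thickapprox(1-r)^p(1-|z|^2)^p|1-rz|^{-2p}$ — uniform in $r$, as the factor $(1+r)^p$ lies in $[1,2^p]$ — one gets
$$
\int_{\D}|f'(z)|^2\big(1-|\sigma_r(z)|^2\big)^p\,dA(z)\thickapprox(1-r)^p\int_{\D}\big|f'(z)(1-rz)^{-p}\big|^2(1-|z|^2)^p\,dA(z).
$$
Writing $f'(z)(1-rz)^{-p}=\sum_{n\ge0}c_n(r)z^n$, so that
$$
c_n(r)=\sum_{k=0}^n(k+1)a_{k+1}\binom{n-k+p-1}{n-k}r^{n-k}\ge0,
$$
orthogonality of the monomials together with $\int_{\D}|z|^{2n}(1-|z|^2)^p\,dA(z)=B(n+1,p+1)\thickapprox(n+1)^{-(p+1)}$ yields
$$
\int_{\D}\big|f'(z)(1-rz)^{-p}\big|^2(1-|z|^2)^p\,dA(z)=\sum_{n=0}^\infty c_n(r)^2\,B(n+1,p+1)\thickapprox\sum_{n=0}^\infty\frac{c_n(r)^2}{(n+1)^{p+1}}.
$$
Finally $\binom{n-k+p-1}{n-k}\thickapprox(n-k+1)^{p-1}$ with constants depending only on $p$, and since all terms are nonnegative this lets one replace $c_n(r)$ by $\sum_{k=0}^n(k+1)a_{k+1}(n-k+1)^{p-1}r^{n-k}$ without changing the size of the sum. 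Combining the displays, $\|f\|_{\qp}^2$ is comparable to
$$
\sup_{0\le r<1}\sum_{n=0}^\infty\frac{(1-r)^p}{(n+1)^{p+1}}\Big(\sum_{k=0}^n(k+1)a_{k+1}(n-k+1)^{p-1}r^{n-k}\Big)^2,
$$
and the equivalence follows since $f\in\qp$ exactly when $\|f\|_{\qp}<\infty$.

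I expect the main obstacle to be the first reduction: it is what turns a genuinely two-dimensional, rotation-non-invariant quantity into a one-parameter family of radial integrals, and it rests entirely on the nonnegativity of the Fourier coefficients of $|1-te^{i\phi}|^{-2p}$ together with $a_n\ge0$. Everything else is routine bookkeeping — the binomial asymptotics, the beta-integral identity, and checking that all comparison constants depend only on $p$ — although one should be careful that the comparison $(1-|\sigma_r(z)|^2)^p\thickapprox(1-r)^p(1-|z|^2)^p|1-rz|^{-2p}$ is uniform in $r$ and that the term-by-term replacement inside the squared sum is legitimate precisely because the coefficients are nonnegative.
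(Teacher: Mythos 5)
Your argument is correct. Note that the paper you are working from does not prove this statement at all: it is imported verbatim as Theorem~2.3 of the cited reference [AGW] (Aulaskari--Girela--Wulan, \emph{J. Math. Anal. Appl.} 258 (2001)), so there is no internal proof to compare against; your write-up is essentially the standard argument for that result. Both of your key steps check out: the reduction $\sup_{w\in\D}=\sup_{0\le r<1}$ is legitimate because the Fourier coefficients $d_m(t)$ of $|1-te^{i\phi}|^{-2p}=(1-te^{i\phi})^{-p}(1-te^{-i\phi})^{-p}$ are nonnegative (a product of binomial series with positive coefficients for $p>0$) and $a_n\ge 0$ makes every term in front of $\cos((j-k)\psi)$ nonnegative, so $\psi=0$ maximizes; and the radial computation via $c_n(r)=\sum_{k=0}^n(k+1)a_{k+1}\binom{n-k+p-1}{n-k}r^{n-k}$, the identity $\int_\D|z|^{2n}(1-|z|^2)^p\,dA(z)=B(n+1,p+1)\thickapprox (n+1)^{-(p+1)}$, and the uniform comparison $\binom{m+p-1}{m}\thickapprox(m+1)^{p-1}$ (with nonnegativity allowing term-by-term replacement inside the square) delivers exactly the displayed quantity. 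The only points worth making explicit in a final version are the justifications for interchanging the sums with the $\theta$- and $\rho$-integrations (Tonelli, since all terms are nonnegative) and the observation that the inequality $I(w)\le I(|w|)$ holds trivially when the radial integral is infinite, so the identity $\sup_w=\sup_r$ needs no convergence hypothesis.
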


We need the following   well-known estimates  (cf.  \cite[Lemma 3.10]{Zhu}).
\begin{otherl}\label{useful  estimates}
 Let $\beta$ be any real number.  Then
$$
\int^{2\pi}_0\frac{d\theta}{|1-ze^{-i\theta}|^{1+\beta}}\thickapprox
\begin{cases}1 & \enspace \text{if} \ \ \beta<0,\\
                     \log\frac{2}{1-|z|^2} & \enspace  \text{if} \ \  \beta=0,\\
                     \frac{1}{(1-|z|^2)^\beta} & \enspace \text{if}\ \  \beta>0,
                   \end{cases}
$$
for all $z\in \D$.
\end{otherl}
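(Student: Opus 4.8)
\ The plan is to reduce the estimate to a one-variable integral and then split according to the sign of $\beta$. Since the integrand depends on $z$ only through $|z|$ (replace the variable $\theta$ by $\theta+\arg z$), I may assume $z=r\in[0,1)$, and I set $\delta:=1-r$. If $r\le \tfrac{1}{2}$ then $|1-re^{-i\theta}|\thickapprox 1$ uniformly in $\theta$, so the left-hand side is $\thickapprox 1$; in this range $\log\frac{2}{1-r^{2}}\thickapprox 1$ and $(1-r^{2})^{-\beta}\thickapprox 1$ as well, so all three cases of the claim hold trivially. It therefore suffices to treat $r\in[\tfrac{1}{2},1)$, where $1-r^{2}\thickapprox\delta$.

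For such $r$ I would use the elementary identity $|1-re^{-i\theta}|^{2}=(1-r)^{2}+4r\sin^{2}(\theta/2)$ together with $\sin^{2}(\theta/2)\thickapprox\theta^{2}$ for $\theta\in[-\pi,\pi]$ and $4r\thickapprox 1$, which gives $|1-re^{-i\theta}|^{2}\thickapprox\delta^{2}+\theta^{2}$. Raising both sides to the (arbitrary real) power $-(1+\beta)/2$ preserves this comparison up to a constant depending only on $\beta$, and then, by symmetry in $\theta$,
\[
\int_{0}^{2\pi}\frac{d\theta}{|1-re^{-i\theta}|^{1+\beta}}\thickapprox\int_{0}^{\pi}\frac{d\theta}{(\delta^{2}+\theta^{2})^{(1+\beta)/2}}=\delta^{-\beta}\int_{0}^{\pi/\delta}\frac{du}{(1+u^{2})^{(1+\beta)/2}},
\]
the last step being the substitution $\theta=\delta u$. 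Thus everything reduces to the behaviour of $I_{\beta}(\delta):=\int_{0}^{\pi/\delta}(1+u^{2})^{-(1+\beta)/2}\,du$ as $\delta\to 0^{+}$, keeping in mind that $\pi/\delta\to\infty$.

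Three cases remain. If $\beta>0$, then $\int_{0}^{\infty}(1+u^{2})^{-(1+\beta)/2}\,du$ converges while $\int_{0}^{1}(1+u^{2})^{-(1+\beta)/2}\,du$ is a fixed positive number (and $\pi/\delta\ge 2\pi$), so $I_{\beta}(\delta)\thickapprox 1$ and the original integral is $\thickapprox\delta^{-\beta}\thickapprox(1-r^{2})^{-\beta}$. If $\beta=0$, then $(1+u^{2})^{-1/2}\thickapprox u^{-1}$ for $u\ge 1$, whence $I_{0}(\delta)\thickapprox 1+\log(\pi/\delta)\thickapprox\log\frac{2}{1-r^{2}}$. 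If $\beta<0$, then $(1+u^{2})^{-(1+\beta)/2}\thickapprox u^{-(1+\beta)}$ for $u\ge 1$ and, since $-\beta>0$, $\int_{1}^{\pi/\delta}u^{-(1+\beta)}\,du\thickapprox\delta^{\beta}\to\infty$, so $I_{\beta}(\delta)\thickapprox\delta^{\beta}$ and the original integral is $\thickapprox\delta^{-\beta}\cdot\delta^{\beta}\thickapprox 1$. Combining the three cases with $\delta\thickapprox 1-r^{2}$ finishes the proof.

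The whole argument is bookkeeping; the only points needing a little care are the two-sided nature of the comparisons — for the lower bound when $\beta\le 0$ one may simply note that the region $\theta\in[\pi/2,\pi]$ alone contributes a fixed positive amount, since $|1-re^{-i\theta}|\thickapprox 1$ there — and the borderline exponents $\beta=0$ and $\beta=-1$, where $(1+u^{2})^{-(1+\beta)/2}$ fails to be integrable at infinity and the size of $I_{\beta}(\delta)$ must be read off the upper limit $\pi/\delta$. (An alternative route is to expand $(1-re^{-i\theta})^{-(1+\beta)/2}$ in a binomial series and apply Parseval, comparing the resulting power series in $r^{2}$ with the three regimes via the known growth of the binomial coefficients; the direct estimate above seems cleaner.)
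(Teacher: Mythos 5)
Your argument is correct and complete. Note that the paper does not prove this statement at all: it is quoted as a known result with a reference to Zhu's book (Lemma 3.10 there), so there is no ``paper proof'' to compare against. Your route --- reduce to $z=r$, use $|1-re^{-i\theta}|^{2}=(1-r)^{2}+4r\sin^{2}(\theta/2)\thickapprox\delta^{2}+\theta^{2}$, rescale by $\theta=\delta u$, and read off the three regimes from $\int_{0}^{\pi/\delta}(1+u^{2})^{-(1+\beta)/2}\,du$ --- is the standard real-variable proof and all the steps are sound: the reduction to $r\ge\tfrac12$, the two-sided comparison being preserved under an arbitrary real power, and the borderline exponents $\beta=0$ and $\beta=-1$ (where the size is governed by the upper limit $\pi/\delta$) are all handled correctly. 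The alternative you mention in passing --- expanding $(1-ze^{-i\theta})^{-(1+\beta)/2}$ in a binomial series, applying Parseval to get $\sum_{n}\bigl(\tfrac{\Gamma(n+(1+\beta)/2)}{n!\,\Gamma((1+\beta)/2)}\bigr)^{2}|z|^{2n}$, and invoking Stirling --- is essentially the proof in the cited reference; it has the advantage of producing the exact coefficient asymptotics (useful elsewhere in this circle of ideas, e.g.\ for Proposition \ref{u n^s} and Theorem \ref{Qp coeff}), but for $1+\beta\le 0$ the binomial route needs a separate elementary remark, whereas your direct estimate covers every real $\beta$ uniformly. Either way, nothing is missing.
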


For $0<s<\infty$ and    a finite positive Borel
measure $\mu$ on $[0, 1)$, set
$$
f_{\mu, s}(z)=\sum_{n=0}^\infty \frac{\Gamma(n+s)}{\Gamma(s)n!} \mu_n z^n, \ \ z\in \D.
$$
Now we state the other main result in this section     which is inspired by Lemma \ref{inter labda B} and Proposition \ref{u n^s}. %For $s>1$, there exists measures $\mu$ such that $\left\{\frac{\Gamma(n+s)}{\Gamma(s)n!} u_n\right\}$ is not decreasing. The proof given below is  without using Lemma \ref{inter labda B} and Proposition \ref{u n^s}.

\begin{prop} \label{newCM2}
Suppose  $0<s<\infty$ and   $\mu$ is a finite positive Borel
measure on $[0, 1)$. Let $1<p<\infty$ and let  $X$ be  a subspace of $H(\D)$ with $\Lambda^p_{1/p}\subseteq X\subseteq\B$.
Then $\mu$ is an $s$-Carleson measure  if and only if
$f_{\mu, s}\in X$.
\end{prop}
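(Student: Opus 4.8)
The plan is to replace $f_{\mu,s}$ by a closed-form integral and then prove the two implications separately, using the sandwich hypothesis only through $\Lambda^p_{1/p}\subseteq X$ and $X\subseteq\B$. (One cannot simply quote Lemma~\ref{inter labda B}, since the Taylor coefficients $\frac{\Gamma(n+s)}{\Gamma(s)n!}\mu_n$ of $f_{\mu,s}$ need not be monotone when $s>1$.) Expanding the binomial series $(1-w)^{-s}=\sum_{n\ge 0}\frac{\Gamma(n+s)}{\Gamma(s)n!}w^n$ for $|w|<1$ and interchanging sum and integral (legitimate because $\mu$ is finite and $|tz|\le|z|<1$), one obtains
$$f_{\mu,s}(z)=\int_{[0,1)}\frac{d\mu(t)}{(1-tz)^s},\qquad f_{\mu,s}'(z)=\int_{[0,1)}\frac{st\,d\mu(t)}{(1-tz)^{s+1}},\qquad z\in\D,$$
and all estimates below are carried out on these formulas.

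Assume first that $\mu$ is an $s$-Carleson measure; since $\Lambda^p_{1/p}\subseteq X$, it suffices to show $f_{\mu,s}\in\Lambda^p_{1/p}$, that is, $\sup_{0<r<1}(1-r)^{1-1/p}M_p(r,f_{\mu,s}')<\infty$. I would apply Minkowski's integral inequality to the formula for $f_{\mu,s}'$, and then Lemma~\ref{useful  estimates} with $\beta=(s+1)p-1>0$ to the resulting inner integral $\int_0^{2\pi}|1-tre^{i\theta}|^{-(s+1)p}\,d\theta\thickapprox(1-tr)^{1-(s+1)p}$, to get
$$M_p(r,f_{\mu,s}')\lesssim\int_{[0,1)}\frac{d\mu(t)}{(1-tr)^{s+1-1/p}},\qquad 0<r<1.$$
Next, Proposition~\ref{newCM1} in the direction $(i)\Rightarrow(ii)$, applied with $r=0$ and $t=1-1/p$ in its statement (admissible since $0<s$ and $1-1/p>0$) and evaluated at $a=r\in[0,1)$, gives precisely $\sup_{0<r<1}(1-r)^{1-1/p}\int_{[0,1)}(1-tr)^{-(s+1-1/p)}\,d\mu(t)<\infty$. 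Combining the last two displays yields $(1-r)^{1-1/p}M_p(r,f_{\mu,s}')\lesssim 1$, hence $f_{\mu,s}\in\Lambda^p_{1/p}\subseteq X$.

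Conversely, if $f_{\mu,s}\in X$ then $f_{\mu,s}\in\B$. The integrand defining $f_{\mu,s}'(r)$ is nonnegative for real $r\in[0,1)$, so restricting the integral to $t\in[r,1)$ and using $1-tr\le 1-r^2$ together with $st\ge s/2$ when $r\ge 1/2$ gives $(1-r^2)\,|f_{\mu,s}'(r)|\gtrsim(1-r^2)^{-s}\mu([r,1))$; since the left-hand side is at most $\|f_{\mu,s}\|_\B$, this forces $\mu([r,1))\lesssim(1-r)^s$ for $r\in[1/2,1)$, while the range $r\in[0,1/2)$ is immediate from the finiteness of $\mu$. Hence $\mu$ is an $s$-Carleson measure on $[0,1)$. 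The only points requiring care are the standard justifications for interchanging sum and integral and for Minkowski's inequality, and --- the main bookkeeping step --- correctly matching the four parameters of Proposition~\ref{newCM1} with the exponent $s+1-1/p$ that appears here; I do not anticipate any essential difficulty beyond this.
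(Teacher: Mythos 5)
Your proof is correct. The forward implication is essentially identical to the paper's: both pass to the integral representation $f_{\mu,s}(z)=\int_{[0,1)}(1-tz)^{-s}\,d\mu(t)$, apply Minkowski's integral inequality and Lemma~\ref{useful  estimates} to obtain $M_p(r,f_{\mu,s}')\lesssim\int_{[0,1)}(1-tr)^{-(s+1-1/p)}\,d\mu(t)$, and then invoke Proposition~\ref{newCM1} with the same parameter choice ($r=0$, $t=1-1/p$) to conclude $f_{\mu,s}\in\Lambda^p_{1/p}\subseteq X$. Where you genuinely diverge is the converse. The paper deduces from $X\subseteq\B=\Q_q$ ($q>1$) that $f_{\mu,s}\in\Q_q$, then combines Stirling's formula with the Taylor-coefficient characterization of $\Q_q$ (Theorem~\ref{Qp coeff}) and a fairly delicate chain of lower bounds on the resulting double sums to arrive at $\mu([r,1))\lesssim(1-r)^s$. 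You instead use only the pointwise Bloch estimate on the radius: since the integrand of $f_{\mu,s}'(r)$ is nonnegative for $r\in[0,1)$, restricting to $t\in[r,1)$ and using $1-tr\le 1-r^2$ gives $(1-r^2)f_{\mu,s}'(r)\gtrsim\mu([r,1))(1-r^2)^{-s}$, which is bounded by $\|f_{\mu,s}\|_\B$. This is a legitimate and noticeably more elementary route; it exploits the positivity and one-dimensional support of $\mu$ directly rather than going through the coefficient machinery, and it avoids Theorem~\ref{Qp coeff} entirely. What the paper's heavier argument buys is nothing extra for this particular statement (both only use $X\subseteq\B$), so your shortcut is a clean simplification. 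Your side remark that Lemma~\ref{inter labda B} is not directly quotable because the coefficients $\frac{\Gamma(n+s)}{\Gamma(s)n!}\mu_n$ need not be decreasing is also apt --- the paper likewise does not apply that lemma, citing it only as motivation.
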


\begin{proof}
 Let $\mu$ be  an $s$-Carleson measure. Clearly,
 $$
 f_{\mu, s}(z)=\int_{[0, 1)} \frac{1}{(1-tz)^s}d\mu(t)
 $$
 for any $z\in \D$.
  For $p>1$, it follows from  the Minkowski inequality and Lemma \ref{useful  estimates} that
\begin{align*}
M_p(r, f'_{\mu, s})\leq& s \left(\frac{1}{2\pi}\int_0^{2\pi} \left(\int_{[0, 1)}\frac{1}{|1-tre^{i\theta}|^{s+1}}d\mu(t)\right)^pd\theta\right)^{1/p}\\
\leq & s  \int_{[0, 1)} \left( \frac{1}{2\pi}\int_0^{2\pi} \frac{1}{|1-tre^{i\theta}|^{(s+1)^p}}d\theta \right)^{1/p}  d\mu(t)\\
\lesssim& \int_{[0, 1)} \frac{1}{(1-tr)^{s+1-\frac{1}{p}}} d\mu(t)
\end{align*}
for all $0<r<1$. Combining this with Proposition \ref{newCM1}, we get $f_{\mu, s}\in \Lambda^p_{1/p}$ and hence $f_{\mu, s}\in X$.

On the other hand, let $f_{\mu, s}\in X$. Then $f_{\mu, s}\in \Q_q$ with $q>1$. By the Stirling formula,
$$
\frac{\Gamma(n+s)}{\Gamma(s)n!}\thickapprox (n+1)^{s-1}
$$
for all nonnegative  integers $n$. Consequently, by Theorem \ref{Qp coeff} we deduce
{\small
\begin{eqnarray*}
\infty&>&\sum_{n=0}^\infty \f{(1-r)^q}{(n+1)^{q+1}}\left(\sum_{k=0}^n(k+2)^{s}\mu_{k+1}(n-k+1)^{q-1}r^{n-k}\right)^2\\
&\gtrsim& \sum_{n=0}^\infty \f{(1-r)^q}{(4n+1)^{q+1}}\left(\sum_{k=0}^{4n}(k+2)^{s}\mu_{k+1}(4n-k+1)^{q-1}r^{4n-k}\right)^2\\
&\gtrsim& \sum_{n=0}^\infty \f{(1-r)^q}{(4n+1)^{q+1}}\left(\sum_{k=n}^{2n}(k+2)^{s}\int_r^1 t^{k+1}d\mu(t) (4n-k+1)^{q-1}r^{4n-k}\right)^2\\
&\gtrsim& \mu^2([r, 1))  (1-r)^q\sum_{n=0}^\infty \f{r^{8n+2}}{(4n+1)^{q+1}}\left(\sum_{k=n}^{2n}(k+2)^{s}(4n-k+1)^{q-1} \right)^2\\
&\gtrsim&\mu^2([r, 1)) (1-r)^q \sum_{n=0}^\infty (4n+2)^{2s+q-1} r^{8n+2}\\
&\thickapprox& \f{\mu^2([r, 1))}{(1-r)^{2s}}
\end{eqnarray*}
}
for all $r\in [0, 1)$ which yields that $\mu$ is an $s$-Carleson measure.  The proof is complete.
\end{proof}

\section{$\qp$ spaces and the range of  $\Cu$ acting on $H^\infty$}

In this section, we characterize finite  positive Borel measures $\mu$ on $[0,1)$ such that $\C_\mu(H^\infty)\subseteq \qp$ for $0<p<2$. Descriptions of Carleson measures in     Proposition \ref{newCM1}  play  a key role in our proof.

The following lemma is from \cite{OF}.

\begin{otherl}\label{estiamtes}
Suppose $s>-1$, $r>0$, $t>0$ with $r+t-s-2>0$. If $r$, $t<2+s$, then
$$
\int_\D \frac{(1-|z|^2)^s}{|1-\overline{a}z|^r|1-\overline{b}z|^t}dA(z)\lesssim \frac{1}{|1-\overline{a}b|^{r+t-s-2}}
$$
for all $a$, $b\in \D$. If $t<2+s<r$, then
$$
\int_\D \frac{(1-|z|^2)^s}{|1-\overline{a}z|^r|1-\overline{b}z|^t}dA(z)\lesssim \frac{(1-|a|^2)^{2+s-r}}{|1-\overline{a}b|^{t}}
$$
for all $a$, $b\in \D$.
\end{otherl}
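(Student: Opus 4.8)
The plan is to reduce both estimates to the one--variable Forelli--Rudin estimate by decomposing $\D$ around the two boundary singularities of the kernel. The basic tool is the standard consequence of Lemma~\ref{useful estimates}: writing the integral in polar coordinates, applying Lemma~\ref{useful estimates} to the angular part and evaluating the elementary radial integral gives, for $\sigma>-1$ and $c\in\mathbb{R}$,
$$
\int_\D \frac{(1-|z|^2)^\sigma}{|1-\overline{w}z|^{2+\sigma+c}}\,dA(z)\thickapprox
\begin{cases}
1,& c<0,\\
\log\frac{2}{1-|w|^2},& c=0,\\
(1-|w|^2)^{-c},& c>0.
\end{cases}
$$
I will also use its \emph{localized} form, obtained by splitting $\{z\in\D:|1-\overline{a}z|\thickapprox\rho\}$ into Carleson--type boxes of sidelength $\rho$ over dyadic $\rho\in[1-|a|,1]$: for $\gamma\neq 2+\sigma$,
$$
\int_{\{|1-\overline{a}z|<\rho\}}(1-|z|^2)^\sigma|1-\overline{a}z|^{-\gamma}\,dA(z)\thickapprox
\begin{cases}
\rho^{2+\sigma-\gamma},& \gamma<2+\sigma,\\
(1-|a|^2)^{2+\sigma-\gamma},& \gamma>2+\sigma,
\end{cases}
$$
and, when $\gamma>2+\sigma$, also $\int_{\{|1-\overline{a}z|\ge\rho\}}(1-|z|^2)^\sigma|1-\overline{a}z|^{-\gamma}\,dA(z)\lesssim\rho^{2+\sigma-\gamma}$; the point is which end of the geometric series in the shell index dominates. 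I also record two elementary facts: $|1-\overline{a}b|\le|1-\overline{a}z|+|1-\overline{b}z|$ for all $z\in\D$ (from $1-\overline{a}b=(1-\overline{a}z)+\overline{a}(z-b)$ and $|z-b|\le|1-\overline{b}z|$), and $\max\{1-|a|^2,\,1-|b|^2\}\le 2|1-\overline{a}b|$.

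Fix $a,b\in\D$ and set $R_a=\{z\in\D:|1-\overline{a}z|<\tfrac12|1-\overline{a}b|\}$, $R_b=\{z\in\D:|1-\overline{b}z|<\tfrac12|1-\overline{a}b|\}$ and $R_0=\D\setminus(R_a\cup R_b)$. By the triangle--type inequality, $R_a\cap R_b=\varnothing$; moreover $|1-\overline{b}z|\ge\tfrac12|1-\overline{a}b|$ on $R_a$, $|1-\overline{a}z|\ge\tfrac12|1-\overline{a}b|$ on $R_b$, and both lower bounds hold on $R_0$. Split the integral in question as $\int_{R_a}+\int_{R_b}+\int_{R_0}$. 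On $R_a$ and $R_b$ I pull the factor that is bounded below out of the integral --- losing the corresponding power of $|1-\overline{a}b|$ --- and evaluate the residual one--pole integral by the localized estimate. When $t<2+s<r$ this finishes the proof: the powers $|1-\overline{a}b|^{2+s-r}$ produced on $R_b$ and $R_0$ are reabsorbed into $(1-|a|^2)^{2+s-r}$ using $1-|a|^2\lesssim|1-\overline{a}b|$ and $2+s-r<0$, and every piece is $\lesssim(1-|a|^2)^{2+s-r}|1-\overline{a}b|^{-t}$. When $r,t<2+s$, the same maneuver on $R_a$ and $R_b$ gives $\lesssim|1-\overline{a}b|^{-(r+t-s-2)}$ there.

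The delicate piece is $R_0$ in the case $r,t<2+s$: neither pole is singular enough to absorb a full power of $|1-\overline{a}b|$, and extracting $|1-\overline{a}b|^{-(r+t-s-2)}$ leaves a residual integral whose exponents sum to exactly $2+s$ --- the borderline, logarithmically divergent case. The remedy is a mild perturbation: fix a small $\eta\in(0,r+t-s-2)$, write $r+t-s-2-\eta=p+q$ with $p=\tfrac{r}{r+t}(r+t-s-2-\eta)\in[0,r]$ and $q=\tfrac{t}{r+t}(r+t-s-2-\eta)\in[0,t]$, and on $R_0$ bound $|1-\overline{a}z|^{-p}|1-\overline{b}z|^{-q}\le 2^{p+q}|1-\overline{a}b|^{-(r+t-s-2-\eta)}$. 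The remaining factor is controlled by the elementary decoupling $x^{-\alpha}y^{-\beta}\le x^{-(\alpha+\beta)}+y^{-(\alpha+\beta)}$ for $x,y\in(0,2]$, applied with $\alpha+\beta=2+s+\eta$, so that
\begin{align*}
\int_{R_0}\frac{(1-|z|^2)^s}{|1-\overline{a}z|^{r-p}|1-\overline{b}z|^{t-q}}\,dA(z)
&\lesssim \int_\D(1-|z|^2)^s\bigl(|1-\overline{a}z|^{-(2+s+\eta)}+|1-\overline{b}z|^{-(2+s+\eta)}\bigr)\,dA(z)\\
&\lesssim (1-|a|^2)^{-\eta}+(1-|b|^2)^{-\eta}
\end{align*}
by the strictly supercritical one--pole estimate, and this is $\lesssim|1-\overline{a}b|^{-\eta}$ since $1-|a|^2,\,1-|b|^2\lesssim|1-\overline{a}b|$. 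Multiplying the extracted power back gives $\int_{R_0}\lesssim|1-\overline{a}b|^{-(r+t-s-2)}$, and summing the three contributions yields the first assertion.

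I expect the main obstacle to be exactly this treatment of $R_0$ in the first assertion --- turning a genuinely borderline, logarithmically divergent two--pole integral into a harmless one by the $\eta$--perturbation and then reabsorbing the $(1-|a|^2)^{-\eta}$ growth into $|1-\overline{a}b|^{-\eta}$ --- together with keeping careful track, in the dyadic--shell estimates, of which end of the geometric series dominates (this is precisely what separates the two assertions of the lemma). Everything else is routine bookkeeping.
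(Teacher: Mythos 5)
The paper does not prove this lemma; it is quoted from Ortega and F\'abrega \cite{OF}, so your proposal supplies a proof rather than competing with one in the text. Your overall strategy --- decomposing $\D$ into the two caps $R_a$, $R_b$ of radius $\tfrac12|1-\overline{a}b|$ and the complement $R_0$, and running localized one--pole Forelli--Rudin estimates on each piece --- is the standard and correct route; the treatment of $R_a$ and $R_b$ in both cases, and of $R_0$ in the case $t<2+s<r$, is sound.

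There is, however, a genuine error in the last step of the $R_0$ estimate in the case $r,t<2+s$. You bound the decoupled integral by $(1-|a|^2)^{-\eta}+(1-|b|^2)^{-\eta}$ and then claim this is $\lesssim|1-\overline{a}b|^{-\eta}$ ``since $1-|a|^2,\,1-|b|^2\lesssim|1-\overline{a}b|$''. Since $\eta>0$, the map $x\mapsto x^{-\eta}$ is decreasing, so $1-|a|^2\lesssim|1-\overline{a}b|$ yields $(1-|a|^2)^{-\eta}\gtrsim|1-\overline{a}b|^{-\eta}$ --- the opposite of what you need --- and the inequality you want is simply false: take $b=0$ and $|a|\to1$, so that $|1-\overline{a}b|=1$ while $(1-|a|^2)^{-\eta}\to\infty$; your chain of estimates would then produce the bound $(1-|a|^2)^{-\eta}$ for an integral the lemma asserts is $O(1)$. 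The loss occurs precisely when you enlarge $R_0$ to all of $\D$ before applying the supercritical one--pole estimate, thereby reintroducing the singularity at $a$ that the restriction to $R_0$ was keeping away. The repair is immediate and uses a tool you already recorded: on $R_0$ both $|1-\overline{a}z|\ge\tfrac12|1-\overline{a}b|$ and $|1-\overline{b}z|\ge\tfrac12|1-\overline{a}b|$, so instead of extending to $\D$ apply your exterior estimate $\int_{\{|1-\overline{a}z|\ge\rho\}}(1-|z|^2)^s|1-\overline{a}z|^{-(2+s+\eta)}\,dA(z)\lesssim\rho^{-\eta}$ with $\rho=\tfrac12|1-\overline{a}b|$ (and likewise for the $b$--term); this gives $|1-\overline{a}b|^{-\eta}$ for the decoupled integral directly, and multiplying back the extracted factor $|1-\overline{a}b|^{-(r+t-s-2-\eta)}$ completes the first assertion. (Alternatively, a dyadic--shell decomposition of $R_0$ around $a$, using $|1-\overline{b}z|\gtrsim|1-\overline{a}z|$ on the shells of scale $\ge 4|1-\overline{a}b|$, avoids the $\eta$--perturbation altogether.)
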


We give our result as follows.
\begin{theor}\label{1main}
Suppose  $0<p<2$ and $\mu$ is a finite positive Borel measure on $[0,1)$. Then  $\C_\mu(H^\infty)\subseteq \qp$ if and only if $\mu$ is a Carleson measure.
\end{theor}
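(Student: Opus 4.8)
The plan is to split the argument into the two implications, exploiting that the necessity direction is essentially known and that the sufficiency direction reduces, via the integral representation of $\Cu(f)$, to the Carleson-measure estimates of Proposition~\ref{newCM1} together with the area-integral bounds of Lemma~\ref{estiamtes}. For the necessity direction, suppose $\Cu(H^\infty)\subseteq\qp$. Since $\qp\subseteq\B$ for $0<p<2$ (indeed $\qp\subseteq\B$ for all $p$, with equality when $p\ge 1$), we have $\Cu(H^\infty)\subseteq\B$, and by the result of Galanopoulos--Girela--Merch\'an quoted in the introduction this forces $\mu$ to be a Carleson measure. Alternatively, and more in the spirit of this paper, one can test on the constant function $f\equiv 1$: then $\Cu(1)(z)=\sum_{n=0}^\infty(n+1)\mu_n z^n$... actually $\Cu(1)(z)=\sum \mu_n\big(\sum_{k=0}^n 1\big)z^n=\sum(n+1)\mu_n z^n$, which should be compared with $f_{\mu,1}$ up to a constant, so $f_{\mu,1}\in\qp\subseteq\B$, and Proposition~\ref{newCM2} (with $s=1$, $X=\qp$) gives that $\mu$ is a $1$-Carleson measure. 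This is the cleaner route since it reuses Section~2 directly.

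For the sufficiency direction, assume $\mu$ is a Carleson measure and let $f\in H^\infty$ with $\|f\|_\infty\le 1$. The key is the integral representation
$$
\Cu(f)(z)=\int_{[0,1)}\frac{f(tz)}{1-tz}\,d\mu(t),\qquad z\in\D,
$$
which is established in \cite{GGM} and follows by expanding $\frac{1}{1-tz}=\sum t^n z^n$ and matching Taylor coefficients. Differentiating under the integral sign,
$$
(\Cu f)'(z)=\int_{[0,1)}\Big(\frac{t f'(tz)}{1-tz}+\frac{t f(tz)}{(1-tz)^2}\Big)\,d\mu(t).
$$
To show $\Cu(f)\in\qp$ I would estimate $\sup_{w\in\D}\int_\D |(\Cu f)'(z)|^2(1-|\sigma_w(z)|^2)^p\,dA(z)$. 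Using $(1-|\sigma_w(z)|^2)^p\thickapprox\frac{(1-|w|^2)^p(1-|z|^2)^p}{|1-\bar w z|^{2p}}$, the Cauchy--Schwarz inequality in the measure $d\mu$ (exploiting that $\mu$ is finite), and the bounds $|f(tz)|\le 1$ together with the Schwarz--Pick estimate $|f'(tz)|\le \frac{1}{1-t^2|z|^2}\lesssim\frac{1}{1-t|z|}$, one reduces the whole expression to controlling, uniformly in $w\in\D$,
$$
\int_{[0,1)}\Big(\int_\D\frac{(1-|w|^2)^p(1-|z|^2)^p}{|1-\bar w z|^{2p}\,|1-tz|^{4}}\,dA(z)\Big)\,d\mu(t).
$$
For the inner area integral I would apply Lemma~\ref{estiamtes} with $s=p$, $r=2p$, $t_{\mathrm{param}}=4$ (checking the case distinctions $p<2<\text{or}>p+1$; note $0<p<2$ keeps the parameters in range, with the two sub-cases $p<1$ and $1\le p<2$ possibly handled separately), obtaining a bound of the form $\frac{(1-|w|^2)^p}{(1-t)^{r}(1-|w|t)^{s+t-r}}$ or $\frac{(1-|w|^2)^p}{|1-wt|^{\,\cdot}}$ for suitable exponents summing correctly. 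The outer integral against $d\mu$ is then exactly of the type whose finiteness is equivalent, by Proposition~\ref{newCM1} (choosing $r$ and $t$ appropriately with $0\le r<s=1$), to $\mu$ being a $1$-Carleson measure. Pushing the numerology through gives the uniform bound and hence $\Cu(f)\in\qp$.

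The main obstacle I anticipate is the bookkeeping of exponents in the reduction: one must choose the split of the Schwarz--Pick factors and the parameters fed into Lemma~\ref{estiamtes} so that, after integrating out $z$, the residual kernel in $t$ matches the left-hand side of \eqref{1formulaCM} or \eqref{2formulaCM} for an admissible pair $(r,t)$ with $r<s=1$ — in particular one cannot afford an exponent $\ge 1$ on $(1-t)^{-1}$, exactly the obstruction flagged in Remark~1. A secondary technical point is that for small $p$ (say $p<1$) the area integral may fall into the borderline regime of Lemma~\ref{estiamtes} where $r$ or $t_{\mathrm{param}}$ approaches $2+s$; there one may need to nudge the free parameter $t$ in Proposition~\ref{newCM1} to stay strictly inside the allowed ranges, or treat the two sub-cases $0<p<1$ and $1\le p<2$ with slightly different choices. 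Once the parameters are locked in, the estimate is a routine application of Fubini, Cauchy--Schwarz, Lemma~\ref{estiamtes} and Proposition~\ref{newCM1}, so I would present the choice of exponents explicitly up front and then let the computation run.
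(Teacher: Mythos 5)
Your necessity argument coincides with the paper's ($\qp\subseteq\B$ plus Theorem 5 of \cite{GGM}); note only that in your alternative route $\Cu(1)(z)=\sum_n\mu_n z^n=f_{\mu,1}(z)$, not $\sum_n(n+1)\mu_n z^n$ (the constant function $1$ has $a_0=1$ and $a_k=0$ for $k\geq1$), though the conclusion via Proposition \ref{newCM2} with $s=1$ is still the right one. The genuine gap is in the sufficiency direction, and it is twofold. First, the pointwise reduction rests on the inequality $\frac{1}{1-t|z|}\lesssim\frac{1}{|1-tz|}$, which is backwards: one always has $1-t|z|\leq|1-tz|$, and the ratio $|1-tz|/(1-t|z|)$ is unbounded (take $z$ near $i$ and $t$ near $1$), so the term coming from $f'$ cannot be reduced to $|1-tz|^{-2}$ this way. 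Second, and decisively, plain Cauchy--Schwarz in $d\mu$ squares the kernel before the integration in $t$. Even for the harmless $f$-term, after Fubini and Lemma \ref{estiamtes} you are left with
$$
\sup_{w\in\D}\,(1-|w|^2)^p\int_{[0,1)}\frac{(1-t)^{p-2}}{|1-wt|^{2p}}\,d\mu(t),
$$
whose exponents have total homogeneity $(2-p)+2p-p=2$; by Proposition \ref{newCM1} its finiteness is equivalent to $\mu$ being a $2$-Carleson measure, not a $1$-Carleson measure. It genuinely fails for ordinary Carleson measures: for $d\mu(t)=dt$ the integral is already infinite at $w=0$ when $p\leq1$, and for $1<p<2$ the supremum grows like $(1-|w|)^{-1}$ as $w\to1^-$. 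In particular your scheme never reaches the case $p=1$, which is precisely the question of \cite{GGM}.

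The repair is exactly the point of the paper's proof: replace Cauchy--Schwarz by Minkowski's integral inequality for the norm $\big(\int_\D(\cdot)^2(1-|\sigma_w(z)|^2)^p\,dA(z)\big)^{1/2}$, which leaves $\int_{[0,1)}\big(\int_\D\cdots\big)^{1/2}d\mu(t)$, i.e.\ the square root of your kernel, of homogeneity $1$, so that Proposition \ref{newCM1} with $s=1$, $t=p/2$, $r=1-p/2<1$ applies directly. For the $f'$-term one must also keep the factor $(1-|tz|)^{-2}$ and split it as $(1-|tz|)^2\geq(1-t)^{2-2c}(1-|z|)^{2c}$ with $2c<\min\{2-p,p\}$, absorbing $(1-|z|)^{-2c}$ into the weight $(1-|z|^2)^p$ before invoking Lemma \ref{estiamtes}; the surviving exponent of $(1-t)^{-1}$ is then $1-c<1$, which respects the constraint $r<s$ of Proposition \ref{newCM1} that you yourself correctly identified (via Remark 1) as the obstruction one cannot afford to violate.
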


\begin{proof}
Suppose $\C_\mu(H^\infty)\subseteq \qp$. Then $\C_\mu(H^\infty)$ is a subset of the Bloch space. By  \cite[Theorem 5]{GGM}, $\mu$ is a Carleson measure.

Conversely, suppose  $\mu$ is a Carleson measure and $f\in H^\infty$. Then $f$ is also in the Bloch space $\B$.  From Proposition 1 in \cite{GGM},
$$
\C_{\mu}(f)(z)=\int_{[0, 1)} \frac{f(tz)}{1-tz}d\mu(t), \ \ z\in \D.
$$
Hence for any $z\in \D$,
\begin{align}\label{31}
&\|\Cu (f)\|_{\qp} \nonumber \\
\lesssim & \sup_{a\in\D} \left(\int_{\D}\left(\int_{[0,1)}\frac{|tf'(tz)|}{|1-tz|}d\mu(t)\right)^2(1-|\sigma_a(z)|^2)^p dA(z)\right)^{\frac12} \nonumber \\
	& +\sup_{a\in\D}\left(\int_{\D}\left( \int_{[0,1)}\frac{|tf(tz)|}{|1-tz|^2}d\mu(t)\right)^2(1-|\sigma_a(z)|^2)^p dA(z)\right)^{\frac12} \nonumber \\
\lesssim & \|f\|_\B  \sup_{a\in\D} \left(\int_{\D}\left(\int_{[0,1)}\frac{1}{(1-|tz|)|1-tz|}d\mu(t)\right)^2(1-|\sigma_a(z)|^2)^p dA(z)\right)^{\frac12} \nonumber \\
	& +\|f\|_{H^\infty}\sup_{a\in\D}\left(\int_{\D}\left( \int_{[0,1)}\frac{1}{|1-tz|^2}d\mu(t)\right)^2(1-|\sigma_a(z)|^2)^p dA(z)\right)^{\frac12}.\nonumber \\
\end{align}
Let  $c$ be a positive constant such that $2c<\min\{2-p, p\}$. Then
\begin{equation}\label{32}
(1-|tz|)^2\geq (1-t)^{2-2c} (1-|z|)^{2c}
\end{equation}
 for all $t\in [0, 1)$ and all $z\in \D$.
By the Minkowski inequality, (\ref{32}), Lemma \ref{estiamtes} and Proposition \ref{newCM1},  we get
\begin{align}\label{33}
&\sup_{a\in\D} \left(\int_{\D}\left(\int_{[0,1)}\frac{1}{(1-|tz|)|1-tz|}d\mu(t)\right)^2(1-|\sigma_a(z)|^2)^p dA(z)\right)^{\frac12} \nonumber \\
\leq&\sup_{a\in\D}  \int_{[0,1)} \left( \int_{\D} \frac{1}{(1-|tz|)^2|1-tz|^2} (1-|\sigma_a(z)|^2)^p dA(z) \right)^{\frac12} d\mu(t)\nonumber \\
\lesssim &\sup_{a\in\D}(1-|a|^2)^{\frac p2}\int_{[0,1)}\frac{1}{(1-t)^{1-c}}d\mu(t)\big(\int_{\D}\frac{(1-|z|^2)^{p-2c}}{|1-tz|^2|1-\bar{a}z|^{2p}} dA(z)\big)^{\frac12} \nonumber \\
\lesssim &\sup_{a\in\D}\int_{[0,1)}\frac{(1-|a|^2)^{\frac p2}}{(1-t)^{1-c}|1-ta|^{\frac{p}{2}+c}}d\mu(t)<\infty.
\end{align}
Similarly, it follows from Lemma \ref{estiamtes} and Proposition \ref{newCM1} that
\begin{align}\label{34}
&\sup_{a\in\D}\left(\int_{\D}\left( \int_{[0,1)}\frac{1}{|1-tz|^2}d\mu(t)\right)^2(1-|\sigma_a(z)|^2)^p dA(z)\right)^{\frac12} \nonumber \\
\leq & \sup_{a\in\D} \int_{[0,1)} \left( \int_{\D} \frac{1}{|1-tz|^4} (1-|\sigma_a(z)|^2)^p dA(z)\right)^{\frac12} d\mu(t)¡¡\nonumber¡¡\\
\lesssim & \sup_{a\in\D}\int_{[0,1)}\frac{(1-|a|^2)^{\frac p2}}{(1-t^2)^{1-\frac p2}|1-at|^p}d\mu(t)<\infty.
\end{align}
From (\ref{31}), (\ref{33}) and (\ref{34}), we get that $\Cu (f)\in \qp$. The proof is complete,
\end{proof}
\noindent {\bf  Remark 3.}\ \
Set $d\mu_0(x)=dx$ on [0, 1).  Then $d\mu_0$ is a Carleson measure and $\C_{\mu_0}(1)(z)=\frac{1}{z}\log\f{1}{1-z}$.
Clearly,  the function $\C_{\mu_0}(1)$  is not in the Dirichlet space. Thus Theorem \ref{1main} does not hold when $p=0$.

Note that $\qp=\B$ for any $p>1$. Theorem \ref{1main} generalizes Theorem 5 in \cite{GGM} from the Bloch space $\B$ to all $\qp$ spaces. For $p=1$, Theorem \ref{1main} gives an answer to a question raised in \cite[p. 20]{GGM}.
The proof given here highlights   the role of Proposition \ref{newCM1}. In the next section, we give  a more general result where an alternative proof of Theorem \ref{1main} will be  provided.

\section{$s$-Carleson measures and the range of  another Ces\`aro-like operator acting on $H^\infty$ }

It is also natural to consider how  the characterization of $s$-Carleson measures in Proposition \ref{newCM2} can  play a  role in the investigation of the range of  Ces\`aro-like operators acting on $H^\infty$. We consider this
topic by  another kind of Ces\`aro-like operators.

Suppose  $0<s<\infty$ and  $\mu$ is a finite  positive Borel measure on $[0,1)$.
For $f(z)=\sum_{n=0}^\infty a_nz^n$ in  $H(\D)$, we define
$$
\C_{\mu, s} (f)(z)=\sum^\infty_{n=0}\left(\mu_n\sum^n_{k=0}\frac{\Gamma(n-k+s)}{\Gamma(s)(n-k)!}a_k\right)z^n, \quad z\in\D.
$$
  Clearly, $\C_{\mu, 1}$ is equal to $\C_{\mu}$.

\begin{limma}\label{intergera repre}
Suppose  $0<s<\infty$ and  $\mu$ is a finite  positive Borel measure on $[0,1)$. Then
$$
\C_{\mu, s} (f)(z)=\int_{[0,1)}\frac{f(tz)}{(1-tz)^s}d\mu(t)
$$
for $f\in H(\D)$.
\end{limma}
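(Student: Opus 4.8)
The plan is to verify the claimed integral representation
$$
\C_{\mu, s}(f)(z)=\int_{[0,1)}\frac{f(tz)}{(1-tz)^s}\,d\mu(t)
$$
by expanding both sides as power series in $z$ and matching coefficients. First I would recall the generating-function identity
$$
\frac{1}{(1-w)^s}=\sum_{j=0}^\infty \frac{\Gamma(j+s)}{\Gamma(s)\,j!}\,w^j,\qquad w\in\D,
$$
which is the standard binomial series and converges absolutely for $|w|<1$. Writing $f(z)=\sum_{k=0}^\infty a_k z^k$, I would substitute $w=tz$ (valid since $t\in[0,1)$ and $z\in\D$ force $|tz|<1$) to obtain, for each fixed $t$,
$$
\frac{f(tz)}{(1-tz)^s}=\left(\sum_{k=0}^\infty a_k t^k z^k\right)\left(\sum_{j=0}^\infty \frac{\Gamma(j+s)}{\Gamma(s)\,j!}\,t^j z^j\right)
=\sum_{n=0}^\infty\left(\sum_{k=0}^n \frac{\Gamma(n-k+s)}{\Gamma(s)\,(n-k)!}\,a_k\,t^n\right)z^n,
$$
using the Cauchy product and grouping the two powers of $t$ together since $k+j=n$. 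Integrating in $t$ against $d\mu$ and recognizing $\int_{[0,1)}t^n\,d\mu(t)=\mu_n$ then yields exactly $\sum_{n=0}^\infty\bigl(\mu_n\sum_{k=0}^n \frac{\Gamma(n-k+s)}{\Gamma(s)(n-k)!}a_k\bigr)z^n=\C_{\mu,s}(f)(z)$.

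The one genuine point requiring care is the interchange of summation and integration, i.e.\ justifying that
$$
\int_{[0,1)}\sum_{n=0}^\infty\left(\sum_{k=0}^n \frac{\Gamma(n-k+s)}{\Gamma(s)(n-k)!}a_k t^n\right)z^n\,d\mu(t)
=\sum_{n=0}^\infty\left(\int_{[0,1)}\sum_{k=0}^n(\cdots)t^n\,d\mu(t)\right)z^n.
$$
I would handle this by Fubini--Tonelli: fix $z\in\D$, put $\rho=|z|<1$, pick any $\rho<R<1$, and use that $f$ and $(1-w)^{-s}$ are both bounded on $|w|\le R$, say by $M_1$ and $M_2$, so that $|f(tz)/(1-tz)^s|\le M_1 M_2$ uniformly for $t\in[0,1)$; since $\mu$ is a finite measure, the integrand is dominated by the $\mu$-integrable constant $M_1 M_2$, and the absolute convergence of the double series $\sum_{n,k}|\tfrac{\Gamma(n-k+s)}{\Gamma(s)(n-k)!}a_k|\,t^n\rho^n$ (bounded by the product of the two absolutely convergent series at radius $R$) legitimizes both the Cauchy-product rearrangement and the term-by-term integration. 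This is entirely routine, so I do not expect any real obstacle; the only thing to state explicitly is the local boundedness that powers the dominated-convergence argument, and the observation that $\mu$ being finite is exactly what makes the constant bound integrable.

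Alternatively, and perhaps more cleanly for the writeup, I would note that by Proposition 1 in \cite{GGM} one already has $\C_{\mu,1}(f)(z)=\int_{[0,1)}\frac{f(tz)}{1-tz}\,d\mu(t)$, and the present identity is the natural $s$-analogue obtained by replacing the reproducing-type kernel $\tfrac{1}{1-tz}$ with $\tfrac{1}{(1-tz)^s}$ and the binomial coefficients $1$ with $\tfrac{\Gamma(n-k+s)}{\Gamma(s)(n-k)!}$; the proof is formally identical. I would therefore present the coefficient computation as the main argument, insert a single sentence invoking Fubini together with the uniform bound on $|w|\le R$, and conclude. The lemma then feeds directly into the subsequent analysis of $\C_{\mu,s}(H^\infty)\subseteq X$ in the same way that the $s=1$ representation was used in the proof of Theorem \ref{1main}.
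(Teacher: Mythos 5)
Your proof is correct and is exactly the ``simple calculation with power series'' that the paper alludes to and omits: expand $(1-tz)^{-s}$ by the binomial series, form the Cauchy product with $f(tz)$, and integrate term by term against the finite measure $\mu$. The Fubini/dominated-convergence justification you supply (uniform boundedness on $|w|\le R<1$ plus absolute convergence of the double series) is the right way to make the omitted step rigorous, so nothing is missing.
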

\begin{proof}
The proof follows from a simple calculation with power series. We omit it.
\end{proof}

We have the following result.

\begin{theor}\label{2main}
Suppose  $0<s<\infty$ and $\mu$ is a finite  positive  Borel measure on $[0,1)$. Let $\max\{1, \f{1}{s}\}<p<\infty$ and $X$ is a subspace of $H(\D)$ with $\Lambda^p_{1/p}\subseteq X\subseteq\B$.  Then
$\C_{\mu, s}(H^\infty)\subseteq X$ if and only if $\mu$ is an $s$-Carleson measure.
\end{theor}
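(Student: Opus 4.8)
I would prove the two implications separately; necessity is immediate from the results above, while sufficiency is where the descriptions of $s$-Carleson measures on $[0,1)$ from Section 2 do the work. For necessity, apply $\C_{\mu,s}$ to the constant function $1\in H^\infty$: by Lemma \ref{intergera repre} and the expansion $(1-tz)^{-s}=\sum_{n\ge 0}\frac{\Gamma(n+s)}{\Gamma(s)\,n!}t^nz^n$ one has $\C_{\mu,s}(1)(z)=\int_{[0,1)}(1-tz)^{-s}\,d\mu(t)=f_{\mu,s}(z)$. Since $\C_{\mu,s}(H^\infty)\subseteq X$ forces $f_{\mu,s}\in X$, and since $p>1$, Proposition \ref{newCM2} gives that $\mu$ is an $s$-Carleson measure.

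For sufficiency, suppose $\mu$ is an $s$-Carleson measure. Because $\Lambda^p_{1/p}\subseteq X$, it is enough to show $\C_{\mu,s}(H^\infty)\subseteq\Lambda^p_{1/p}$, i.e. that $\sup_{0<r<1}(1-r)^{1-1/p}M_p(r,(\C_{\mu,s}f)')<\infty$ for each $f\in H^\infty$. Fix such an $f$; then $f\in\B$ as well. Differentiating the representation in Lemma \ref{intergera repre} and using $|f'(tz)|\lesssim\|f\|_{\B}(1-|tz|)^{-1}$ and $|f(tz)|\le\|f\|_{H^\infty}$ gives
$$
|(\C_{\mu,s}f)'(z)|\lesssim \|f\|_{\B}\int_{[0,1)}\frac{d\mu(t)}{(1-|tz|)|1-tz|^{s}}+\|f\|_{H^\infty}\int_{[0,1)}\frac{d\mu(t)}{|1-tz|^{s+1}}.
$$
For the second integral, Minkowski's inequality in $L^p(d\theta)$ together with Lemma \ref{useful  estimates} (applicable since $(s+1)p>1$) bounds its $M_p(r,\cdot)$ by a constant times $\int_{[0,1)}(1-tr)^{-(s+1-1/p)}\,d\mu(t)$; multiplying by $(1-r)^{1-1/p}$ this is exactly the integral appearing in Proposition \ref{newCM1} with its parameters $t,r,s$ chosen to be $1-1/p$, $0$, $s$, so it is bounded uniformly in $r$.

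The first integral is the delicate part, because $1-|tz|$ is not the modulus of an analytic function. Here I would fix a constant $c$ with $\max\{0,1-s\}<c<1-1/p$ — such $c$ exists precisely because $p>\max\{1,1/s\}$ — and use the elementary bound $1-|tz|\ge(1-t)^{1-c}(1-|z|)^{c}$ to dominate the first integral by $(1-|z|)^{-c}\int_{[0,1)}(1-t)^{c-1}|1-tz|^{-s}\,d\mu(t)$. The factor $(1-|z|)^{-c}$ is radial, so Minkowski's inequality followed by Lemma \ref{useful  estimates} (this is where $sp>1$, hence $p>1/s$, is needed to avoid a logarithmic term) bounds the corresponding $M_p(r,\cdot)$ by a constant times $(1-r)^{-c}\int_{[0,1)}(1-t)^{c-1}(1-tr)^{-(s-1/p)}\,d\mu(t)$; multiplying by $(1-r)^{1-1/p}$ produces exactly the integral of Proposition \ref{newCM1} with its parameters $t,r,s$ chosen to be $1-1/p-c$, $1-c$, $s$, whose admissibility conditions $0\le 1-c<s$ and $1-1/p-c>0$ hold by the choice of $c$. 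Combining the two estimates shows $\C_{\mu,s}(f)\in\Lambda^p_{1/p}\subseteq X$, which completes the sufficiency.

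The main obstacle is this first integral: one must split $1-|tz|=(1-t)^{1-c}(1-|z|)^{c}$ so that simultaneously (i) the exponent $sp$ left on $|1-tz|$ exceeds $1$, so that Lemma \ref{useful  estimates} yields a genuine power of $1-tr$ rather than a logarithm, and (ii) the exponents produced fit the range $0\le r<s<\infty$, $t>0$ required by Proposition \ref{newCM1}; reconciling (i) and (ii) is exactly what fixes the admissible range of $c$, and hence explains the hypothesis $p>\max\{1,1/s\}$. As a by-product, taking $s=1$ (so that $\C_{\mu,1}=\C_{\mu}$) and recalling $\Q_q=\B$ for $q>1$, this argument reproves Theorem \ref{1main} for $1<p<2$ — the alternative proof announced in Section 3.
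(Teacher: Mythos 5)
Your proof is correct and follows essentially the same route as the paper: necessity via $\C_{\mu,s}(1)=f_{\mu,s}$ and Proposition \ref{newCM2}, sufficiency by pushing $\C_{\mu,s}(f)$ into $\Lambda^p_{1/p}$ through Minkowski's inequality, Lemma \ref{useful  estimates} and Proposition \ref{newCM1}. The one place you diverge is the first integral: the splitting $1-|tz|\ge(1-t)^{1-c}(1-|z|)^{c}$ with $\max\{0,1-s\}<c<1-1/p$ is valid and your parameter bookkeeping in Proposition \ref{newCM1} checks out, but it is superfluous here, since for $z=re^{i\theta}$ one has $1-|tz|=1-tr$, which is independent of $\theta$; so Minkowski's inequality applied to $\int_{[0,1)}(1-tr)^{-1}|1-tre^{i\theta}|^{-s}\,d\mu(t)$ directly yields the bound $\int_{[0,1)}(1-tr)^{-(s+1-1/p)}\,d\mu(t)$, and Proposition \ref{newCM1} with exponents $t=1-1/p$, $r=0$ finishes it under exactly the same hypothesis $p>\max\{1,1/s\}$. (The $c$-splitting is genuinely needed in the paper's Theorem \ref{1main}, where the area integral over $\D$ mixes the radial and angular variables, which is presumably where you picked it up; in the purely radial $\Lambda^p_{1/p}$ setting it buys nothing.)
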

\begin{proof}

Let $\C_{\mu, s}(H^\infty)\subseteq X$. Then $\C_{\mu, s}(1)\in X$; that is, $f_{\mu, s}\in X$. It follows from Proposition \ref{newCM2} that $\mu$ is an $s$-Carleson measure.

On the other hand, let $\mu$ be an $s$-Carleson measure and $f\in H^\infty$. By Lemma \ref{intergera repre}, we see
\begin{align*}
	\Cus (f)'(z)=\int_{[0,1)}\frac{tf'(tz)}{(1-tz)^s}d\mu(t)+ \int_{[0,1)}\frac{stf(tz)}{(1-tz)^{s+1}}d\mu(t), \quad z\in\D.
\end{align*}
Then
\begin{align}\label{41}
&\sup_{0<r<1}(1-r)^{1-\frac1p}\left(\f{1}{2\pi}\int_0^{2\pi}|\Cus (f)'(re^{i\theta})|^p d\theta\right)^{\frac1p}\nonumber \\
\lesssim &\|f\|_\B \sup_{0<r<1}(1-r)^{1-\frac1p} \left(\f{1}{2\pi}\int_0^{2\pi}\left(\int_{[0,1)} \frac{1}{|1-tre^{i\theta}|^{s}(1-tr)} d\mu(t)\right)^p d\theta\right)^{\frac1p} \nonumber \\
&+\|f\|_{H^\infty}\sup_{0<r<1}(1-r)^{1-\frac1p} \left(\f{1}{2\pi}\int_0^{2\pi}\left(\int_{[0,1)} \frac{1}{|1-tre^{i\theta}|^{s+1}} d\mu(t)\right)^p d\theta\right)^{\frac1p}. \nonumber  \\
\end{align}
Note that $ps>1$. By  the Minkowski inequality, Lemma \ref{useful  estimates} and Lemma \ref{S-CM}, we deduce
\begin{align}\label{42}
&\sup_{0<r<1}(1-r)^{1-\frac1p} \left(\f{1}{2\pi}\int_0^{2\pi}\left(\int_{[0,1)} \frac{1}{|1-tre^{i\theta}|^{s}(1-tr)} d\mu(t)\right)^p d\theta\right)^{\frac1p} \nonumber \\
\leq & \sup_{0<r<1}(1-r)^{1-\frac1p} \int_{[0,1)} \left(\f{1}{2\pi}\int_0^{2\pi} \frac{1}{|1-tre^{i\theta}|^{sp}(1-tr)^p}  d\theta\right)^{\frac1p}  d\mu(t) \nonumber \\
\lesssim &  \sup_{0<r<1}(1-r)^{1-\frac1p}  \int_{[0,1)} \f{1}{(1-tr)^{s+1-\frac1p}}  d\mu(t)<\infty,  \nonumber \\
\end{align}
and
\begin{align}\label{43}
&\sup_{0<r<1}(1-r)^{1-\frac1p} \left(\f{1}{2\pi}\int_0^{2\pi}\left(\int_{[0,1)} \frac{1}{|1-tre^{i\theta}|^{s+1}} d\mu(t)\right)^p d\theta\right)^{\frac1p}\nonumber \\
\lesssim & \sup_{0<r<1}(1-r)^{1-\frac1p} \int_{[0,1)} \left( \f{1}{2\pi}\int_0^{2\pi}\frac{1}{|1-tre^{i\theta}|^{(s+1)p} } d\theta  \right)^{\frac1p} d\mu(t) \nonumber \\
\lesssim & \sup_{0<r<1}(1-r)^{1-\frac1p} \int_{[0,1)} \f{1}{(1-tr)^{s+1-\frac1p}}   d\mu(t)<\infty.
\end{align}
From (\ref{41}), (\ref{42}) and (\ref{43}),  $\Cus(f)\in \Lambda^p_{1/p}$. Note that $\Lambda^p_{1/p}\subseteq X$.  The desired result follows.
\end{proof}

\end{document}